\documentclass[12pt]{article}
\usepackage{amsmath}
\usepackage{amsfonts}
\usepackage{amsthm}
\usepackage{amssymb, setspace}
\usepackage{color,graphicx,epsfig,geometry,hyperref,fancyhdr}
\usepackage[T1]{fontenc}
\usepackage{float}
\usepackage{subfig}
\usepackage{bbm}
\usepackage{mathrsfs,fleqn}
\newtheorem{theorem}{Theorem}[section]

\newtheorem{remark}{Remark}[section]
\newcommand{\N}{\mathbb{N}}

\newcommand{\R}{\mathbb{R}}

\newcommand{\dnu}{\partial_\nu}

\newcommand{\grad}{\nabla}

\newcommand{\ep}{\varepsilon}

\graphicspath{{pics/}}

\begin{document}

\begin{flushleft}
\Large 
\noindent{\bf \Large Regularization of the Factorization Method applied to diffuse optical tomography}
\end{flushleft}

\vspace{0.2in}

{\bf  \large Isaac Harris}\\
\indent {\small Department of Mathematics, Purdue University, West Lafayette, IN 47907 }\\
\indent {\small Email: \texttt{harri814@purdue.edu}}\\


\begin{abstract}
\noindent In this paper, we develop a new regularized version of the Factorization Method for {\color{black} positive} operators mapping a complex Hilbert Space into it's dual space. The Factorization Method uses Picard's Criteria to define an indicator function to image an unknown region. In most applications the data operator is compact which gives that the singular values can tend to zero rapidly which can cause numerical instabilities. The regularization of the Factorization Method presented here seeks to avoid the numerical instabilities in applying Picard's Criteria. This method allows one to image the interior structure of an object with little a priori information in a computationally simple and analytically rigorous way. Here we will focus on an application of this method to diffuse optical tomography where will prove that this method can be used to recover an unknown subregion from the Dirichlet-to-Neumann mapping. Numerical examples will be presented in two dimensions. 
\end{abstract}

\noindent {\bf Keywords}:  Factorization Method $\cdot$ Regularization $\cdot$ Diffuse Optical Tomography   \\

\noindent {\bf MSC}:  35J05, 35Q81, 46C07

\section{Introduction}
In this paper, we focus on two major problems related to {shape reconstruction problems}. The first of which is to derive a theoretically rigorous and computationally simple regularization algorithm for the Factorization Method. Then we will consider an inverse shape problem coming from semiconductor theory see for e.g. \cite{IP-book}. The Factorization Method(see manuscript \cite{kirschbook} for details) falls under the category of {\it qualitative methods}(otherwise known as non-iterative or direct methods) for solving inverse shape problems 
and was introduced in \cite{firstFM}.
 These methods where first introduced in \cite{CK} and are frequently used in non-destructive testing where physical measurements on the surface or exterior of an object is used to determine the integrity of the interior structure. Non-destructive testing plays an important role in many medical and engineering applications. In general, the Factorization Method and similar qualitative methods such as the Direct Sampling Method \cite{DSMdot,DSMHarris,Liu} can be used to derive analytically rigorous and computational simple methods for solving inverse shape problems coming from elliptic \cite{Gebauer}, parabolic \cite{FMheat} and hyperbolic \cite{FM-wave} partial differential equations. One of the main advantages of using qualitative methods over a non-linear optimization techniques is the fact that in general qualitative methods require little a priori information about the region of interest. These methods all give an `indicator' function that can be computed by the given data to reconstruct an unknown region.

The Factorization Method is based on Picard's criteria which would require one to compute a series where one divides by the eigenvalues of a compact operator. Since the eigenvalues of a compact operator can tend to zero rapidly this could result in instabilities in the numerical reconstruction. There has been some previous work on analyzing the use of regularization strategies applied to the Picard's criteria  in \cite{arens2,RegFM}. The analysis studied here is mainly motived by the Generalized Linear Sampling Method introduced in \cite{GLSM} and as well as the similar analysis applied to inverse scattering for near-field data in \cite{Harris-Rome}. Here we will extend the result found in \cite{Harris-Rome} which loosely speaking can  be generalized for a positive compact operator $A: X \to X$ where $X$ is a Hilbert Space with $A=S^* T S$ then 
$$ \ell \in \text{Range} (S^*) \quad \text{ if and only if} \quad \liminf\limits_{\alpha \to 0} \left(x_\alpha  , A x_\alpha \right)_{X} < \infty $$ 
where $x_\alpha$ is the regularized solution to $Ax=\ell$. Here $(\cdot , \cdot )_{X} $ corresponds to the inner-product on $X$. This is proven by appealing to the spectral decomposition of $A$ via the Hilbert-Schmidt Theorem and requiring  standard assumptions on the filter functions coming from classical regularization methods. In this paper, we consider the case for a positive compact operator $A: X \to X^*$ where $X$ is a complex Hilbert Space and $X^*$ is the dual space. We are able to extend the above result to obtain that for  $A=S^* T S$ then 
$$\ell \in\text{Range} (S^*) \quad \text{ if and only if} \quad \liminf\limits_{\alpha \to 0} \langle x_\alpha \, , Ax_\alpha \rangle_{X\times X^*} < \infty.$$
where $x_\alpha$ is the regularized solution to $Ax=\ell$. Here $\langle \cdot \, , \cdot \rangle_{X\times X^*}$ is the sesquilinear dual-pairing between $X$ and $X^*$. The main analytical piece one needs to extend the result is to derive a spectral decomposition for the given operator $A$. This extension is needed if one wishes to apply the `Regularized Factorization Method' to problems coming from Diffuse Optical and Electrical Impedance Tomography where the operator $A$ is given by the Dirichlet-to-Neumann operator which maps $H^{1/2}(\Gamma)$ into it's dual space $H^{-1/2}(\Gamma)$ where $\Gamma$ is some curve/surface.

The rest of the paper is structured as follows. In Section \ref{RFM}, we develop the analytic framework for the regularized variant of the Factorization Method. To do this, we derive a spectral decomposition for a positive compact operator  $A: X \to X^*$ and derive a regularized version of Picard's criteria. Then in Section \ref{InvShape}, we consider an inverse shape reconstruction problem for a model problem related to semiconductor theory where the given data operator is the Dirichlet-to-Neumann mapping. In Section \ref{numerics}, numerical examples in two dimensions are presented for solving the inverse shape problem. Finally, in Section \ref{end} we summaries the results in the paper as well as discuss future research for this methods. 

\section{Regularized Factorization Method}\label{RFM}
In this section, we wish to derive a new Regularized Factorization Method that can be used to solve inverse shape problems. The analysis in this section is motivated by the works in \cite{arens,arens2,GLSM} where other regularization techniques applied to the Factorization Method are studied. In \cite{arens,arens2} the Linear Sampling Method which is the predecessor to the Factorization Method is validated by appealing to the regularized solution to the far-field equation. Recently, in \cite{GLSM} a new regularization technique was studied for the Linear Sampling Method to solve the far-field equation which incorporates the analysis from the Factorization Method. We now develop a simpler regularization strategy for a positive compact operator to determine a range characterization. The method presented here does not require the minimization of a complex functional and gives a rigors range characterization.

To begin, we assume that $A: X \to X^*$ with complex separable Hilbert Space $X$ is a positive compact operator. Here we let $X^*$ denote the dual space of $X$. Throughout this section, we will denote $\langle \cdot \, , \cdot \rangle_{X\times X^*}$ as the sesquilinear dual-product between $X$ and $X^*$. Furthermore, assume that $H$ is the separable Hilbert pivoting space to the dual-product $\langle \cdot \, , \cdot \rangle_{X\times X^*}$ such that it coincides with the inner-product on $H$ with dense inclusions $X \subset H \subset X^*$(i.e. a Gelfand triple). Furthermore, assume that we have the factorization 
$$A=S^* T S  \quad \text{ where } \quad S: X \to V  \quad \text{ and } \quad T: V \to V$$
with $V$ also being a Hilbert Space. Here the adjoint for $S$ is a mapping $S^*: V \to X^*$ given by the equality 
\begin{align}\label{adjoint}
(S x , v )_{V} = \langle x , S^* v \rangle_{X\times X^*} \quad \text{ for all } \quad  v \in {V} \textrm{ and } x \in X.
\end{align}
We will assume that $S$ is a compact and injective operator as well as assuming $T$ is a bounded operator and coercive on Range$(S)$ i.e. there is a fixed constant $\beta >0$ such that $\beta \| Sx \|^2_V \leq (Sx , TSx)_V $ for all $x \in X$.

Now, to continue we consider the case when we also have $A=Q^* Q$ where $Q: X \to H$ and it's adjoint $Q^*: H \to X^*$ are bounded linear operators. Similarly  the adjoint $Q^*$ is defined by  
$$(Q x , \phi )_{H} = \langle x , Q^* \phi \rangle_{X\times X^*} \quad \text{ for all } \quad  \phi \in {H} \textrm{ and } x \in X.$$ 
The operator $A$ also has this factorization when it is positive and self-adjoint where $Q$ is it's square root. To derive our regularized Factorization Method we will characterize the range of $S^*$ with the regularized solution to $Ax=\ell$ for any $\ell \in X^*$. To do so, we first connect the characterize the range of $S^*$ to the range of $Q^*$.


\begin{theorem}\label{range-SQ}
Let $A: X \to X^*$ with factorizations $A=Q^* Q$ and $A=S^* T S$ such that $S: X \to V$,  $T: V \to V$ and $Q: X \to H$  are bounded operators where $X$, $V$ and $H$ are Hilbert Spaces. Assume that $T$ is coercive on Range$(S)$ then we have that 
$$\text{Range}(Q^* )=\text{Range}(S^* ).$$
\end{theorem}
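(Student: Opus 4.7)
The plan is to reduce the equality of ranges to a two-sided norm equivalence of the form $c_1\|Sx\|_V \leq \|Qx\|_H \leq c_2\|Sx\|_V$ on $X$ and then apply the classical range inclusion lemma of R.G. Douglas. That lemma states that for bounded operators $M:X\to Y_1$ and $N:X\to Y_2$ between Hilbert spaces, $\text{Range}(M^*)\subseteq \text{Range}(N^*)$ if and only if there exists a constant $C$ such that $\|Mx\|_{Y_1}\leq C\|Nx\|_{Y_2}$ for every $x\in X$. Once the norm equivalence is in hand, applying Douglas' lemma once with $(M,N)=(Q,S)$ and once with $(M,N)=(S,Q)$ will yield the claimed set equality in both directions.

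To obtain the equivalence I would first compute $\|Qx\|_H^2$ by chaining the two factorizations of $A$ together with the adjoint identities recorded in the excerpt, namely
\begin{equation*}
\|Qx\|_H^2 = (Qx,Qx)_H = \langle x, Q^*Qx\rangle_{X\times X^*} = \langle x, Ax\rangle_{X\times X^*} = \langle x, S^*TSx\rangle_{X\times X^*} = (Sx,TSx)_V.
\end{equation*}
Positivity of $A$ ensures that every quantity in this chain is a nonnegative real number, so there is no ambiguity about interpretation. Boundedness of $T$ then gives the upper estimate $(Sx,TSx)_V \leq \|T\|\,\|Sx\|_V^2$, while the assumed coercivity of $T$ on $\text{Range}(S)$ with constant $\beta>0$ gives the lower estimate $\beta\|Sx\|_V^2 \leq (Sx,TSx)_V$. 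Combining these yields the double inequality $\beta\|Sx\|_V^2 \leq \|Qx\|_H^2 \leq \|T\|\,\|Sx\|_V^2$, which is exactly the hypothesis required by Douglas' lemma.

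I do not expect a substantive obstacle here; the key conceptual step is simply recognizing that Douglas' range inclusion theorem, though often stated for a common codomain, applies verbatim to operators with different target spaces and in the complex Hilbert space setting. The only mild subtlety is the interpretation of the coercivity inequality as a real-valued estimate, which is justified a posteriori by positivity of $A$ forcing the sesquilinear form $(S\cdot,TS\cdot)_V$ to be real on the diagonal. Everything else is a routine manipulation of adjoints and does not require any additional hypotheses beyond those already imposed on $S$, $T$, and $Q$.
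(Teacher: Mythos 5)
Your proposal is correct and follows essentially the same route as the paper: the identical computation $\|Qx\|_H^2=\langle x,Ax\rangle_{X\times X^*}=(Sx,TSx)_V$, the same two-sided estimate $\beta\|Sx\|_V^2\leq\|Qx\|_H^2\leq\|T\|\,\|Sx\|_V^2$, and then a range-inclusion lemma (the paper cites Embry's Theorem 1, which is the Banach-space version of the Douglas majorization theorem you invoke, and is the appropriate citation here since $S^*$ and $Q^*$ are dual-pairing adjoints landing in $X^*$).
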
 
\begin{proof}
To prove the claim, we notice that by appealing to the factorizations $A=Q^* Q$ and $A=S^* T S$ we have that 
$$ \| Qx \|^2_H = \langle x \, , Ax \rangle_{X\times X^*} = (Sx , TSx)_V \quad \text{ for all } \quad  x \in X.$$
Note that we have also used the definition of the adjoints for the operators $S: X \to V$ and $Q: X \to H$. Due to the coercivity of  $T$ there is constant $\beta >0$ such that 
$$ \beta \| Sx \|^2_V  \leq \| Qx \|^2_H \leq \| T \| _{V\to V} \| Sx \|^2_V \quad \text{ for all } \quad  x \in X.$$
Therefore, by Theorem 1 of \cite{range-lemma} we can conclude that Range$\big( Q^* \big)$ = Range$(S^*)$.
\end{proof}

In order to continue we derive a spectral decomposition for the positive compact operator $A$ as is done in \cite{kirschpp}. To this end, notice that the Riesz Representation Theorem implies that there is a bijective isometry $J : X^* \to X$ such that 
$$ \ell \longmapsto x_\ell \quad \text{ where } \quad  (x \, , x_\ell )_X = \langle x \, , \ell \rangle_{X\times X^*}  \quad \text{ for all } \quad  x \in X.$$
Note that, due to the fact that we have a sequilinear dual-product gives that $J$ is linear. Now, define the compact operator $J A : X \to X$ and notice that 
$$\big(x \, , (JA) x \big)_X = \langle x \, , A x \rangle_{X\times X^*} > 0 \quad \text{ for all } \quad  x \in X \setminus \{0\}$$
since $A$ is assumed to be positive. This implies that $JA$ is a self-adjoint(see Corollary 7.3 of \cite{LA-done}) compact operator on the complex Hilbert Space $X$. 
Therefore, by the Hilbert-Schmidt Theorem we have the there exists an eigenvalue decomposition 
\begin{align}\label{LAdecomp}
\{ \lambda_n ; x_n \}_{n \in \N} \in \R_{>0} \times X\quad \text{ such that} \quad (JA) x = \sum \lambda_n (x,x_n)_X  \, x_n
\end{align}
where $\lambda_n$ is a decreasing sequence converging to zero and $x_n$ is an orthonormal basis of $X$ since $JA$ is injective. We now define $\ell_n \in X^*$ to be the unique solution to $J \ell_n = x_n$ for any $n \in \N$. The functionals $\ell_n \in X^*$ satisfy 
\begin{align}\label{dual-basis} 
\langle x_m \, , \ell_n \rangle_{X\times X^*} = (x_m \, , x_n )_X = \delta_{mn} \quad \text{ for any } \quad n,m \in \N
\end{align}
and are the corresponding dual-basis of $X^*$.  
{\color{black}
\begin{remark} Note, that in order to remove the assumption that $X$ is a complex Hilbert Space we could assume that the bilinear form
$$(x,y) \longmapsto  \langle y \, , A x \rangle_{X\times X^*} $$
is symmetric. This would again give that $JA$ is a self-adjoint compact operator acting on $X$. Which is the key piece needed to obtain equation \eqref{LAdecomp} which is the only thing in the analysis that requires the complex assumption on the Hilbert Space.
\end{remark} 
} 

We now show that the set $\{\ell_n \}_{n \in \N}$ is a complete orthonormal set for $X^*$ as well as determine a representation for any $\ell \in X^*$ and the operator $A$. Notice, that $X^*$ is a Hilbert Space with inner-product defined by the sesquilinear form
$$ (\ell , \varphi )_{X^*} = (x_\ell , x_\varphi )_{X} \quad \text{ for all } \quad \ell, \varphi \in X^*  \quad \textrm{ where }\;  J \ell = x_\ell \; \textrm{ and } \; J \varphi = x_\varphi.$$
From the definition of the inner-product  we have  that 
$$(\ell_m , \ell_n )_{X^*}  =  (x_m \, , x_n )_X = \delta_{mn} \quad \text{ for any } \quad n,m \in \N.$$
This implies that $\{\ell_n \}_{n \in \N}$ is an orthonormal set in $X^*$. In order to prove that the set is complete we assume that $\ell \in X^*$ is orthogonal to the set $\{\ell_n \}_{n \in \N}$. Therefore, we have that for all $n \in \N$
$$ 0 = (\ell , \ell_n )_{X^*} =  (x_\ell \, , x_n )_X  \quad \text{ giving that } \quad x_\ell = 0$$
and we can conclude that $\ell = 0$ since $J$ is an isometry. This implies that the sequence $\ell_n$ forms an orthonormal basis on $X^*$. We then conclude
$$\ell = \sum   (\ell , \ell_n )_{X^*} \, \ell_n \quad \text{ and notice that  } \quad  \overline{ \langle x_n,\ell  \rangle}_{X\times X^*} = (\ell , \ell_n )_{X^*}.$$
From this we have the representation
\begin{align}\label{elldecomp}
\ell = \sum  \overline{ \langle x_n,\ell  \rangle}_{X\times X^*} \, \ell_n \quad \text{ for all } \quad  \ell \in X^*.
\end{align}
By the injectivity of the operator $J$ we can conclude that the operator $A$ has the spectral decomposition 
\begin{align}\label{Adecomp}
A x = \sum \lambda_n  \langle x,\ell _n \rangle_{X\times X^*} \,  \ell_n \quad \text{ for all } \quad  x \in X.
\end{align}
To derive our regularized variant of the Factorization Method we will characterize the range of $S^*$ by the spectral decomposition of the compact operator $A$ given by \eqref{Adecomp} which is given in the following result.

\begin{theorem}\label{range-id}
Let $A: X \to X^*$ {\color{black} be a positive operator} with the factorization $A=S^* T S$ such that $S: X \to V$ and $T: V \to V$ are bounded linear operators where $X$ and $V$ are Hilbert Spaces. Assume that $S$ is compact and injective as well as $T$ being coercive on Range$(S)$. Then we have that 
$$\ell \in \text{Range}(S^* ) \quad \text{if and only if} \quad  \sum \frac{1}{\lambda_n}  \left|\langle x_n ,\ell \rangle_{X\times X^*} \right|^2 <\infty$$
where $\{ \lambda_n \, ; x_n \}_{n \in \N} \in \R_{>0} \times X$ are given by the spectral decomposition  \eqref{Adecomp} of $ A$.
\end{theorem}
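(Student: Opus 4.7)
The plan is to construct an explicit $\ell^2$-valued factorization of $A$ directly from the spectral decomposition \eqref{Adecomp}, then reduce to it via Theorem \ref{range-SQ}. Concretely, I would define $Q: X \to \ell^2(\N;\C)$ by
\[
 Qx \;=\; \bigl(\sqrt{\lambda_n}\,\langle x,\ell_n\rangle_{X\times X^*}\bigr)_{n\in\N}.
\]
The first step is to check that $Q$ is a well-defined bounded linear operator and that $A = Q^*Q$. Boundedness uses that $\{x_n\}$ is an orthonormal basis of $X$ together with the identity $\langle x,\ell_n\rangle_{X\times X^*} = (x,x_n)_X$ (coming from $J\ell_n = x_n$), so $\sum_n |\langle x,\ell_n\rangle|^2 = \|x\|_X^2$ and hence $\|Qx\|_{\ell^2}^2 \le \lambda_1 \|x\|_X^2$. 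A direct computation using \eqref{Adecomp} and the biorthogonality \eqref{dual-basis} gives
\[
 (Qy,Qx)_{\ell^2} \;=\; \sum_n \lambda_n\,\langle y,\ell_n\rangle\,\overline{\langle x,\ell_n\rangle} \;=\; \langle y, Ax\rangle_{X\times X^*},
\]
so $Q^*Q = A$.

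Once $A = Q^*Q$ is in hand, Theorem \ref{range-SQ} (whose hypotheses are exactly the ones assumed here) yields $\text{Range}(S^*) = \text{Range}(Q^*)$, and it suffices to characterize $\text{Range}(Q^*)$. For this I would compute $Q^*c$ explicitly for $c = (c_n)\in\ell^2$ using the dual-basis expansion \eqref{elldecomp}. Since
\[
 \langle x_m, Q^*c\rangle_{X\times X^*} \;=\; (Qx_m,c)_{\ell^2} \;=\; \sqrt{\lambda_m}\,\overline{c_m},
\]
formula \eqref{elldecomp} gives $Q^*c = \sum_n \sqrt{\lambda_n}\,c_n\,\ell_n$. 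Therefore $Q^*c = \ell$ forces, by uniqueness of the dual-basis coefficients, $c_n = \overline{\langle x_n,\ell\rangle_{X\times X^*}}/\sqrt{\lambda_n}$, and this sequence belongs to $\ell^2$ precisely when
\[
 \sum_n \frac{1}{\lambda_n}\bigl|\langle x_n,\ell\rangle_{X\times X^*}\bigr|^2 \;<\; \infty.
\]
Conversely, under that summability condition, the candidate $c_n$ lies in $\ell^2$ and produces $\ell$ as $Q^*c$, so $\ell \in \text{Range}(Q^*) = \text{Range}(S^*)$.

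The only real obstacle is bookkeeping: keeping the complex conjugates in the sesquilinear dual pairing straight when passing between $\ell \in X^*$, its dual-basis coefficients $\overline{\langle x_n,\ell\rangle}$, and the $\ell^2$-coefficients $c_n$. The argument is essentially Picard's criterion applied to $A^{1/2}$, with the Riesz map $J$ and the orthonormal dual basis $\{\ell_n\}$ from \eqref{dual-basis} taking the place of a self-adjoint square root; the remaining work is mechanical once the spectral decomposition \eqref{Adecomp} and the construction of $Q$ are set up.
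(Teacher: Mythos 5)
Your proposal is correct and follows essentially the same route as the paper: both construct an auxiliary factorization $A=Q^*Q$ from the spectral decomposition \eqref{Adecomp}, invoke Theorem \ref{range-SQ} to identify $\text{Range}(S^*)$ with $\text{Range}(Q^*)$, and then read off the Picard condition from the dual-basis expansion \eqref{elldecomp}. The only (cosmetic) difference is that you take $\ell^2(\N)$ as the intermediate space with its canonical basis, whereas the paper uses the pivot space $H$ of the Gelfand triple with an arbitrary orthonormal basis $\{\phi_n\}$; under the unitary identification $H\cong\ell^2$ these are the same operator, and Theorem \ref{range-SQ} applies equally in either case.
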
 
\begin{proof}
By our assumptions on $T$ and $S$ we have that $A$ is a positive compact operator. This implies that $A$ has the spectral decomposition \eqref{Adecomp} where $\{x_n\}_{n \in \N}$ is an orthonormal basis of $X$. Therefore, we denoted by $\{\phi_n \}_{n \in \N} $ an orthonormal basis of $H$. Now, define the bounded linear operator 
$Q^*: H \to X^*$ such that 
$$Q^* \phi = \sum \sqrt{\lambda_n}  ( \phi, \phi_n )_{H} \,  \ell_n \quad \text{ for all } \quad  \phi \in {H}.$$
Then recall that we define the operator $Q : X \to H$ by the equality
$$ (Q x , \phi)_{H} = \langle x , Q^* \phi \rangle_{X\times X^*} \quad \text{ for all } \quad  \phi \in {H} \textrm{ and } x \in X.$$
Therefore, we can conclude that for any $n \in \N$
$$ (Q x , \phi_n )_{H} = \langle x , Q^*\phi_n \rangle_{X\times X^*} = \sqrt{\lambda_n} \langle x ,\ell_n \rangle_{X\times X^*} $$
where we have used the fact that  $\{\phi_n \}_{n \in \N}$ is an orthonormal set and the definition of $Q^*$. This implies that the adjoint operator can be expressed as 
$$Q x = \sum \sqrt{\lambda_n} \langle x ,\ell_n \rangle_{X\times X^*} \,  \phi_n \quad \text{ for all } \quad  x \in X.$$
We can now conclude that $Q^*\phi_n=\sqrt{\lambda_n} \ell_n$ and $Q x_n=\sqrt{\lambda_n} \phi_n$ by \eqref{dual-basis} for any $n \in \N$. This gives that $Q^* Q x_n = \lambda_n \ell_n$ for any $n \in \N$. Now, notice that by \eqref{dual-basis} and \eqref{Adecomp} we have that $Ax_n = \lambda_n \ell_n = Q^* Qx_n$ which implies that $A = Q^* Q$ since they agree on a basis.  Therefore, by Theorem \ref{range-SQ} we can conclude that Range$(S^*)$ = Range$( Q^*)$.

Using the definition of $Q^*$, we can proceed as in the proof of Picard's Criteria (see for e.g. Theorem 1.28 of \cite{TE-book}) to prove the result. Now, we have that $\ell \in$ Range$(S^*)$ which implies that  $\ell \in$ Range$(Q^*)$ and is equivalent to $Q^*\phi = \ell$ for some $\phi \in {H}$. By appealing to the definition of $Q^*$ and \eqref{elldecomp} we have that 
$$\overline{ \langle x_n,\ell  \rangle}_{X\times X^*} =  \sqrt{\lambda_n}  ( \phi, \phi_n )_{H} \quad \text{ for all } \quad n \in \N $$ 
by the linear independence of the set $\{\ell_n \}_{n \in \N}$. Since $\{\phi_n \}_{n \in \N}$ is an orthonormal basis of ${H}$ we conclude that 
$$\sum \frac{1}{\lambda_n}  \left|\langle x_n ,\ell \rangle_{X\times X^*} \right|^2 = \|\phi\|^2_{H} < \infty$$
proving the claim.
\end{proof}

Now that we have characterized the range of $S^*$ via the spectral decomposition \eqref{Adecomp} we now wish to develop a similar result as in \cite{GLSM}. The method we present here has the same theoretical result but one does not need to minimize a non-convex cost functional. To this end, we first notice that if $\ell \in$Range$(A)$ then we have that $Ax=\ell$ for some $ x \in X.$ Then by appealing to \eqref{elldecomp} and \eqref{Adecomp} we obtain the formula  
$$ x =  \sum \frac{1}{\lambda_n} \overline{ \langle x_n,\ell  \rangle}_{X\times X^*} \, x_n.$$
Since $A$ is positive and compact we have that $\lambda_n >0$ and tends to zero as $n \to \infty$. Therefore, we define $x_\alpha$ to be the regularized solution of $Ax=\ell$ which is given by 
\begin{align}\label{regsolu}
x_\alpha =  \sum \frac{f_\alpha(\lambda_n)}{\lambda_n} \overline{ \langle x_n,\ell  \rangle}_{X\times X^*} \, x_n.
\end{align}
Here the $f_\alpha (t)$ denotes the filter function associated with a specific regularization scheme. Notice, that we have used the fact that $\{\lambda_n ; x_n ; \ell_n \} \in \R_{>0}\times X\times X^*$ is the singular system for $A$.  For all $\alpha>0$ the filter $f_\alpha (t) : \big(0 ,\lambda_1 \big] \to \R_{\geq0}$  is assumed to be a family of functions that satisfies for $0 < t \leq  \lambda_1$
$$\lim\limits_{\alpha \to 0} f_\alpha(t) =1 \quad \text{ and } \quad  f_\alpha(t) \leq C_{\text{reg}} \quad \text{for all} \,\, \alpha>0$$
where $\lambda_1$ corresponds to the largest spectral value defined in \eqref{LAdecomp}. 
Two common filter--functions are defined as  
\begin{align}
f_\alpha (t) =  \frac{t^2}{t^2+\alpha} \quad \text{and} \quad  \displaystyle{ f_\alpha (t)= \left\{\begin{array}{lr} 1 \, \, & \, \, t^2\geq \alpha,  \\
 				&  \\
 0\, \,  & \, \,   t^2 < \alpha.
 \end{array} \right.}  \label{filters}
 \end{align}
The filter functions in \eqref{filters} are for Tikhonov regularization and Spectral cutoff, respectively(see for e.g. \cite{kirschipbook}). 
Now, just as in \cite{Harris-Rome} we can now derive a regularization variant of the Factorization Method. The results here extend the analysis in \cite{Harris-Rome} to positive compact operators mapping a Hilbert Space into it's corresponding dual space. The following results connects the Range$(S^*)$ to the boundedness as the regularization parameter $\alpha \to 0$ of the quantity $\langle x_\alpha \, , Ax_\alpha \rangle_{X\times X^*}$.

\begin{theorem}\label{reg-range-lemma}
Let $A: X \to X^*$ {\color{black} be a positive operator} with the factorization $A=S^* T S$ such that $S: X \to V$ and $T: V \to V$ are bounded linear operators where $X$ and $V$ are Hilbert Spaces. Assume that $S$ is compact and injective as well as $T$ being coercive on Range$(S)$.  Then we have that
$$\ell \in\text{Range} (S^*) \quad \text{ if and only if} \quad \liminf\limits_{\alpha \to 0} \langle x_\alpha \, , Ax_\alpha \rangle_{X\times X^*} < \infty$$
where $x_\alpha$ is the regularized solution given by \eqref{regsolu} to $Ax=\ell$. 
\end{theorem}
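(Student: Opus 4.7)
The plan is to reduce everything to the spectral series expression for $\langle x_\alpha, A x_\alpha\rangle_{X\times X^*}$ and then compare it term-by-term to the Picard-type series appearing in Theorem \ref{range-id}. To set this up, I first use the factorization $A=Q^*Q$ produced inside the proof of Theorem \ref{range-id}, together with the identities $Qx_n=\sqrt{\lambda_n}\phi_n$, to evaluate $Qx_\alpha$ when $x_\alpha$ is given by \eqref{regsolu}. A direct substitution yields
$$Q x_\alpha \;=\; \sum \frac{f_\alpha(\lambda_n)}{\sqrt{\lambda_n}}\,\overline{\langle x_n,\ell\rangle}_{X\times X^*}\,\phi_n,$$
and since $\langle x_\alpha , A x_\alpha\rangle_{X\times X^*}=\|Qx_\alpha\|_H^2$, Parseval gives
$$\langle x_\alpha , A x_\alpha\rangle_{X\times X^*}\;=\;\sum \frac{f_\alpha(\lambda_n)^2}{\lambda_n}\,\bigl|\langle x_n,\ell\rangle_{X\times X^*}\bigr|^2.$$
This identity is the only nontrivial preparation; once it is in hand, the rest is a standard dominated/Fatou argument for the series.

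For the forward implication, assume $\ell\in\text{Range}(S^*)$. By Theorem \ref{range-id} the series $\sum \lambda_n^{-1}|\langle x_n,\ell\rangle_{X\times X^*}|^2$ converges. The filter bound $f_\alpha(t)\le C_{\text{reg}}$ then yields $\langle x_\alpha , A x_\alpha\rangle_{X\times X^*}\le C_{\text{reg}}^{\,2}\sum \lambda_n^{-1}|\langle x_n,\ell\rangle_{X\times X^*}|^2$ uniformly in $\alpha$, so $\liminf_{\alpha\to 0}\langle x_\alpha , Ax_\alpha\rangle_{X\times X^*}<\infty$.

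For the converse, suppose the $\liminf$ equals some $M<\infty$ and extract a sequence $\alpha_k\to 0$ along which the series tends to $M$. For any fixed truncation level $N$, the partial sum $\sum_{n=1}^{N}\lambda_n^{-1}f_{\alpha_k}(\lambda_n)^2\,|\langle x_n,\ell\rangle_{X\times X^*}|^2$ is dominated by the full series, and as $k\to\infty$ the pointwise convergence $f_{\alpha_k}(\lambda_n)\to 1$ lets me pass to the limit in each finite sum (or invoke Fatou's lemma on the counting measure) to conclude $\sum_{n=1}^{N}\lambda_n^{-1}|\langle x_n,\ell\rangle_{X\times X^*}|^2\le M$. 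Letting $N\to\infty$ yields $\sum \lambda_n^{-1}|\langle x_n,\ell\rangle_{X\times X^*}|^2\le M<\infty$, and Theorem \ref{range-id} then places $\ell$ in $\text{Range}(S^*)$.

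The only genuinely delicate point is the reverse direction: since we only control a $\liminf$, a uniform-in-$\alpha$ bound on the tail is unavailable, so the argument must be localized to a fixed truncation before passing $\alpha_k\to 0$ and only then letting $N\to\infty$. All other steps — the spectral computation of $\|Qx_\alpha\|_H^2$, the use of the filter bound, and the appeal to Theorem \ref{range-id} — are routine once the first identity has been established.
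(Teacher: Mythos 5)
Your proposal is correct and follows essentially the same route as the paper: both reduce the claim to the identity $\langle x_\alpha , A x_\alpha\rangle_{X\times X^*}=\sum f_\alpha^2(\lambda_n)\lambda_n^{-1}|\langle x_n,\ell\rangle_{X\times X^*}|^2$ (you via $\|Qx_\alpha\|_H^2$ and Parseval, the paper by applying $A$ to the series and using the duality relation \eqref{dual-basis}), then use the uniform filter bound $C_{\text{reg}}$ for one direction and the truncate-then-let-$\alpha\to 0$-then-$N\to\infty$ argument for the other, finishing with Theorem \ref{range-id}. No gaps.
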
 
\begin{proof}
We first notice that $A$ is a positive compact operator which gives that it has the spectral decomposition \eqref{Adecomp} where  $Ax_n = \lambda_n \ell_n$ for any $n \in \N$ and we have that 
$$ Ax_\alpha =  \sum {f_\alpha(\lambda_n)} \overline{ \langle x_n,\ell  \rangle}_{X\times X^*} \, \ell_n.$$
Then, we can obtain the equality  
$$\langle x_\alpha \, , Ax_\alpha \rangle_{X\times X^*} = \sum \frac{f^2_\alpha(\lambda_n)}{\lambda_n} | \langle x_n,\ell  \rangle_{X\times X^*}|^2$$
by the  sesquilinear definition of the dual-product as well as the duality relationship given in \eqref{dual-basis}. In order to prove the claim we now bound the limiting value of the quantity $\langle x_\alpha \, , Ax_\alpha \rangle_{X\times X^*}$ by the Picard's Criteria from Theorem \ref{range-id}. 

To this end, we now use the fact that the family of filter functions $\{f_\alpha\}_{\alpha> 0}$  is uniformly bounded to obtain the estimate  
$$\langle x_\alpha \, , Ax_\alpha \rangle_{X\times X^*} \leq C^2_{\text{reg}} \sum \frac{1}{\lambda_n} | \langle x_n,\ell  \rangle_{X\times X^*}|^2.$$
We now find a lower bound for the quantity $\langle x_\alpha \, , Ax_\alpha \rangle_{X\times X^*}$. Now, notice that for any $N \in \N$ we have that 
$$\langle x_\alpha \, , Ax_\alpha \rangle_{X\times X^*} \geq \sum\limits_{n=1}^{N} \frac{f^2_\alpha(\lambda_n)}{\lambda_n} | \langle x_n,\ell  \rangle_{X\times X^*}|^2$$
and by taking the $\liminf$ as $\alpha \to 0$ we obtain that 
$$ \liminf\limits_{\alpha \to 0} \langle x_\alpha \, , Ax_\alpha \rangle_{X\times X^*} \geq \sum\limits_{n=1}^{N} \frac{1}{\lambda_n} | \langle x_n,\ell  \rangle_{X\times X^*}|^2 \quad \text{ for all } \quad N \in \N $$
since the filter function satisfies $f^2_\alpha(\lambda_n) \xrightarrow{\alpha \to 0}  1$ for all $n \leq N$. By summing to infinity in the above inequality gives 
$$ \sum\frac{1}{\lambda_n} | \langle x_n,\ell  \rangle_{X\times X^*}|^2 \leq \liminf\limits_{\alpha \to 0}   \langle x_\alpha \, , Ax_\alpha \rangle_{X\times X^*} \leq C^2_{\text{reg}} \sum \frac{1}{\lambda_n} | \langle x_n,\ell  \rangle_{X\times X^*}|^2.$$ 
Appealing to Theorem \ref{range-id} proves the claim.  
\end{proof}

We note that the filter functions defined in \eqref{filters} satisfy that $C_{\text{reg}}=1$. Therefore, the proof of Theorem \ref{reg-range-lemma} implies that 
$$ \lim\limits_{\alpha \to 0} \langle x_\alpha \, , Ax_\alpha \rangle_{X\times X^*} = \sum \frac{1}{\lambda_n} | \langle x_n,\ell  \rangle_{X\times X^*}|^2.$$ 
The above result gives a range identity theorem which is a way to characterize the Range$(S^{*})$ to the regularized solution of $Ax=\ell$. This range characterization will be used in the preceding  section to develop a sampling algorithm applied to an inverse shape problem in Diffuse Optical Tomography. \\

{\color{black}
\noindent{\it Connection to the Generalized Linear Sampling Method:}\\
 The connection between the regularization of $Ax=\ell$ and the range of $S^*$ was also studied in \cite{GLSM}. The method presented in \cite{GLSM} is referred to the Generalized Linear Sampling Method(GLSM). This is due to the fact that it connects the classical Linear Sampling Method and the Factorization Method(see also \cite{arens}). This is done by considering the minimizer to the functional 
$$\mathcal{J}_{\alpha} \big(\ell ; x \big)  = \alpha   \langle x \, , Ax \rangle_{X\times X^*}  +\| Ax - \ell  \|^2_{X^*}. $$
Due to the assumptions in Theorem \ref{reg-range-lemma} we have that for every $\ell \in X^*$ that $\mathcal{J}_{\alpha} \big(\ell ; x \big)$ has a unique minimizer. Indeed,
 using the decompositions in equations \ref{elldecomp} and \ref{Adecomp} we have that the functional can be written as   
$$\mathcal{J}_{\alpha} \big(\ell ; x \big)  =  \alpha \sum\lambda_n  | \langle x ,\ell_n \rangle_{X\times X^*}|^2 + \sum | \lambda_n \langle x ,\ell_n \rangle_{X\times X^*} - \overline{ \langle x_n ,\ell \rangle}_{X\times X^*} |^2.$$
Therefore, 
arguing as in \cite{GLSM} we have that the minimizer $x_\alpha$ is given by 
$$x_\alpha =\sum \frac{\lambda_n}{\alpha \lambda_n +\lambda_n^2}  \overline{ \langle x_n ,\ell \rangle}_{X\times X^*} x_n$$
for any $\ell \in X^*$. Then the GLSM gives the result that
$$\ell \in\text{Range} (S^*) \quad \text{ if and only if} \quad \liminf\limits_{\alpha \to 0} \langle x_\alpha \, , Ax_\alpha \rangle_{X\times X^*} < \infty$$
where $x_\alpha$ is the minimize of $\mathcal{J}_{\alpha} \big(\ell ; x \big)$ provided that 
$$ \mathcal{J}_{\alpha} \big(\ell ; x_\alpha \big)  \leq  \inf\limits_{{\bf \ell }} \mathcal{J}_{\alpha} \big(\ell; x \big)  +C \alpha $$
for some $C>0$ independent of $\alpha$(see Theorem 3 of \cite{GLSM}). Notice that our calculations imply that the filter function for the GLSM is given by 
$$f_\alpha (t) =  \frac{t}{\alpha  + t}$$ 
and notice that for all $t>0$
$$\lim\limits_{\alpha \to 0} f_\alpha(t) =1 \quad \text{ and } \quad  f_\alpha(t) \leq 1 \quad \text{for all} \,\, \alpha>0.$$
This implies that, for this problem the GLSM is covered by the theory presented in this section. This follows from the fact that computing the minimizer of $\mathcal{J}_{\alpha} \big(\ell ; x \big)$ is just a regularization scheme for solving $Ax=\ell$ under our assumptions. 
}

\section{An Application to Diffuse Optical Tomography}\label{InvShape}
In this section, we will consider a shape reconstruction problem. The problem comes from semiconductor theory where one wishes to use boundary measurements to determine existence of an interior structure. The idea is to use the known/measured Dirichlet-to-Neumann mapping which in general uniquely determines unknown subregions in Diffuse Optical Tomography \cite{DOTuniqueness}. Here we will assume that the domain $D \subset \R^d$ (for $d=2,3$) is a bounded simply connected open set with Lipschitz boundary $\Gamma_\text{1}$ with unit outward normal $\nu$. Now, we let $D_0 \subset D$ (with possible multiple components) be a connected open sets with Lipschitz boundary. We shall also assume that there is a fixed constant $\delta>0$ such that $\text{dist}(\Gamma_\text{1} , \overline {D}_0)= \delta.$ 

Now, let $u \in H^1(D)$ be the unique solution to 
\begin{align}
-\Delta u +\chi({D}_0) u = 0 \,\, \text{ in } \,\, D \quad \text{ and } \quad u|_{_{\Gamma_{1}} } = f \label{prob1}
\end{align} 
for any given $f \in H^{1/2}(\Gamma_\text{1})$ where $\chi(\cdot)$ denotes the indicator function. One can easily show that \eqref{prob1} is well-posed by considering it's equivalent variational formulation(see for e.g. \cite{evans}). This implies that the {\it Dirichlet-to-Neumann} (DtN) mapping
$$ \Lambda : H^{1/2}(\Gamma_{1}) \longrightarrow H^{-1/2}(\Gamma_{1})   \quad \text{such that} \quad \Lambda f = \partial_{\nu} u|_{_{\Gamma_{1}} }$$
is a bounded linear operator by Trace Theorems.  {\color{black} Notice that, in general this application would result in real valued solutions $u$ as long as the boundary data $f$ is real valued. To keep all generality we will proceed as if the functions are complex valued but all of the results still hold for real valued functions.}

The goal is to now use the theory developed in Section \ref{RFM} to derive a sampling algorithm to recover the unknown interior shape $D_0$ from the knowledge of $\Lambda$. This problem is similar to the ones considered in \cite{DSMdot,Gebauer,DOT-RtR} where different sampling methods are employed to solve the inverse shape problem for recovering `defects' in an object. Similar to the previous works we consider the difference of the DtN mappings $(\Lambda - \Lambda_{0})$ where $u_{0} \in H^1(D)$ is the Harmonic Lifting of $f\in H^{1/2}(\Gamma_\text{1})$ satisfying  
\begin{align}
\Lambda_0 f = \partial_{\nu} u_0|_{_{\Gamma_{1}} }  \quad \text{such that} \quad \Delta u_0= 0 \,\, \text{ in } \,\, D \quad \text{ and } \,\, u_0 |_{_{\Gamma_{1}} } = f. \label{harmonic}
\end{align}
Again, we have that $\Lambda_0 : H^{1/2}(\Gamma_{1}) \longrightarrow H^{-1/2}(\Gamma_{1})$ is a bounded linear operator. 

We will now derive a factorization for the difference of the DtN mappings $(\Lambda - \Lambda_0)$. To this end, notice that 
$$ -\Delta (u-u_0) = - \chi({D}_0) u  \,\, \text{ in } \,\, D \quad \text{ and } \quad (u-u_0) |_{_{\Gamma_{1}} } = 0.$$
From this we define $w \in H^1_0(D)$ to  be the unique solution to the source problem 
\begin{align}
 -\Delta w =  \chi({D}_0) h  \,\, \text{ in } \,\, D \quad \text{ and } \quad w |_{_{\Gamma_{1}} } = 0 \label{prob2}
\end{align} 
for any given $h \in L^2(D_0)$ which is well-posed by the variational formulation. Now, define the Source-to-Neumann operator for \eqref{prob2} such that  
$$G: L^2(D_0) \longrightarrow H^{-1/2}(\Gamma_1) \quad \text{given by} \quad Gh = \partial_{\nu} w \big|_{\Gamma_1}.$$
Therefore, by the well-posedness of \eqref{prob2} we have that $ \partial_{\nu} w|_{_{\Gamma_{1}} } = (\Lambda - \Lambda_0) f$ provided that $h= - u|_{D_0}$. From this we define the solution operator for \eqref{prob1} such that 
$$ S: H^{1/2}(\Gamma_1) \longrightarrow L^2(D_0) \quad \text{ given by } \quad Sf = u|_{D_0}.$$
From this we see that $(\Lambda - \Lambda_0)f = -GSf$ for any $f \in H^{1/2}(\Gamma_{1})$. Now, in order to factorize the operator $(\Lambda - \Lambda_0)$ further we need to compute the adjoint of the solution operator $S$. To do so, we define the sesquilinear dual-product 
$$\langle \varphi  ,\psi  \rangle_{\Gamma_1} = \int\limits_{\Gamma_1 } \varphi  \,\overline{ \psi }  \, \text{d}s \quad \text{ for all } \quad \varphi \in H^{1/2}(\Gamma_1) \quad \text{ and }  \quad  \psi \in  H^{-1/2}(\Gamma_1)$$ 
between the Hilbert Space $H^{1/2}(\Gamma_1)$ and it's dual space $H^{-1/2}(\Gamma_1)$ where $L^2(\Gamma_1)$ is the Hilbert pivot space. Recall, that we have 
$$ H^{1/2}(\Gamma_1) \subset L^{2}(\Gamma_1) \subset H^{-1/2}(\Gamma_1)$$ 
with dense inclusions. The adjoint operator $S^*$ is given in the following result. 
\begin{theorem}\label{S-adjoint}
The adjoint operator $S^{*}: L^2(D_0) \longrightarrow H^{-1/2}(\Gamma_1)$ is given by 
$$S^{*} g = - \partial_{\nu} v \big|_{\Gamma_1} \quad \text{where} \quad -\Delta v +\chi({D}_0) v = \chi({D}_0)g \,\, \text{ in } \,\, D \quad \text{ and } \quad  v|_{_{\Gamma_{1}} } = 0.$$
Moreover, the operator $S$ is compact and injective. 
\end{theorem}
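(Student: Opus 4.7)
The plan is to verify the adjoint formula by a direct Green's-identity computation, then deduce compactness by factoring $S$ through a compact embedding, and injectivity by unique continuation for the exterior harmonic problem.

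For the adjoint, I would start from the definition of the $L^2(D_0)$ inner product and extend the integration to all of $D$ using the indicator:
\[ (Sf, g)_{L^2(D_0)} = \int_{D_0} u\,\overline{g}\,dx = \int_D u\,\chi(D_0)\overline{g}\,dx. \]
I then substitute the conjugate of the defining equation for $v$, namely $\chi(D_0)\overline{g} = -\Delta\overline{v} + \chi(D_0)\overline{v}$, and apply Green's first identity on $D$ (whose boundary is $\Gamma_1$) to obtain
\[ \int_D u(-\Delta\overline{v})\,dx = \int_D \nabla u\cdot\nabla\overline{v}\,dx - \int_{\Gamma_1} u\,\overline{\partial_\nu v}\,ds, \]
where $\partial_\nu v|_{\Gamma_1}\in H^{-1/2}(\Gamma_1)$ is understood in the usual weak trace sense. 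Since $v\in H^1_0(D)$, it is an admissible test function in the weak formulation of \eqref{prob1}, which yields $\int_D \nabla u\cdot\nabla\overline{v}\,dx + \int_D \chi(D_0) u\overline{v}\,dx = 0$. Combining these identities cancels the two interior terms and leaves
\[ (Sf, g)_{L^2(D_0)} = -\int_{\Gamma_1} f\,\overline{\partial_\nu v}\,ds = \langle f,\,-\partial_\nu v|_{\Gamma_1}\rangle_{\Gamma_1}, \]
which is exactly $S^*g = -\partial_\nu v|_{\Gamma_1}$.

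For compactness, I would factor $S$ as $H^{1/2}(\Gamma_1)\to H^1(D)\to L^2(D_0)$: the first map is bounded by the well-posedness of \eqref{prob1}, the intermediate restriction to $D_0$ is bounded into $H^1(D_0)$, and $H^1(D_0)\hookrightarrow L^2(D_0)$ is compact by the Rellich-Kondrachov theorem. For injectivity, suppose $Sf=0$, so that $u\equiv 0$ in $D_0$; then the interior Dirichlet and Neumann traces of $u$ on $\partial D_0$ vanish. Because $u\in H^1(D)$ satisfies $-\Delta u + \chi(D_0) u = 0$ weakly across $\partial D_0$, both the Dirichlet trace and the conormal derivative are continuous across $\partial D_0$, so the exterior-side Cauchy data of $u$ also vanish. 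As $u$ is harmonic in $D\setminus\overline{D_0}$, Holmgren's unique continuation theorem forces $u\equiv 0$ in the component of $D\setminus\overline{D_0}$ adjacent to $\Gamma_1$; such a component exists because $\mathrm{dist}(\Gamma_1,\overline{D_0})=\delta>0$, and hence $f=u|_{\Gamma_1}=0$.

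The main obstacle I expect is bookkeeping: keeping the sesquilinear conventions aligned throughout the Green's-identity computation (so that the conjugate ends up on $\partial_\nu v$ and not on $f$), and carefully justifying the transmission step in the injectivity argument, where the vanishing of the exterior Neumann trace on $\partial D_0$ must be deduced from continuity of the conormal derivative implied by the weak formulation of $-\Delta u+\chi(D_0)u=0$ on all of $D$. Neither step is deep, but both need to be written out with care so the unique continuation argument actually applies.
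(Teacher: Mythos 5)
Your proposal is correct and follows essentially the same route as the paper: the paper obtains the adjoint identity from Green's second identity applied to $u$ and $\overline{v}$ (equivalent to your Green's-first-identity-plus-weak-formulation computation, and yielding the same cancellation of interior terms), proves compactness via the embedding $H^1(D_0)\hookrightarrow L^2(D_0)$, and proves injectivity from the vanishing Cauchy data of $u$ on $\partial D_0$ together with Holmgren's theorem. If anything, your write-up is slightly more careful than the paper's about justifying the continuity of the conormal derivative across $\partial D_0$ and the connectivity of $D\setminus\overline{D_0}$, steps the paper takes for granted.
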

\begin{proof}
Notice, that by a variational argument we have that the solution $v \in H^1_0(D)$ exists and depends continuously on $g \in L^2(D_0)$. Now we compute the adjoint operator $S^*$. Therefore, by Green's 2nd Theorem we have that 
$$\int\limits_{\Gamma_1} \overline{v} \dnu u-u \dnu \overline{v} \, \text{d}s = \int\limits_{D} \overline{v} \Delta u-u \Delta \overline{v} \, \text{d}x.$$
First we consider the boundary integral in the above equality. By the definition of $u$ and $v$ we have that 
$$\int\limits_{\Gamma_1} \overline{v} \dnu u-u \dnu \overline{v} \, \text{d}s = - \int\limits_{\Gamma_1} f  \dnu \overline{v} \, \text{d}s$$
where we have used the boundary conditions on $\Gamma_1$ for $u$ and $v$. Now, for the volume integral we notice that we have 
$$  \int\limits_{D} \overline{v} \Delta u-u \Delta \overline{v} \, \text{d}x= \int\limits_{D} \big(\chi(D_0)\overline{v} -\Delta \overline{v} \big)u \, \text{d}x=\int\limits_{D_0}u \overline{g}\, \text{d}x.$$
where we have used the partial differential equations that $u$ and $v$ in the interior of $D$.
From this we have that 
$$\langle f  ,S^* g  \rangle_{\Gamma_1} = - \int\limits_{\Gamma_1} f  \dnu \overline{v} \, \text{d}s =  \int\limits_{D_0}u \overline{g}\, \text{d}x = (Sf,g)_{L^2(D_0)}$$
for all $f \in H^{1/2}(\Gamma_1)$ and $g \in L^2(D_0)$ which implies that $S^{*} g = - \partial_{\nu} v \big|_{\Gamma_1}$. 

To prove injectivity we let $Sf=0$ which means that $u=0$ in $D_0$ which implies that $u$ has zero Cauchy data on $\partial D_0$. Then, appealing to Holmgren's Theorem we can say that $u=0$ in $D$. Then the Trace Theorem implies that $f=0$ on $\Gamma_1$, proving injectivity. By the compact embedding of $H^1(D_0)$ into $L^2(D_0)$ implies that $S$ is a compact operator. Proving the claim. 
\end{proof}

In order to complete the factorization we need to define the middle operator $T$ as in the theory developed in Section \ref{RFM}. To this end, notice that by \eqref{prob2} we have that 
$$ -\Delta w +\chi({D}_0) w =  \chi({D}_0) \big[ h + w\big|_{D_0} \big]  \,\, \text{ in } \,\,  D \quad \text{ and } \quad w |_{_{\Gamma_{1}} } = 0$$
and by the definition of $G$ we have that 
$$\dnu w|_{_{\Gamma_{1}} }= G h \quad \text{ as well as} \quad  \dnu w|_{_{\Gamma_{1}} }= -S^*\big[h + w\big|_{D_0}\big]$$
from Theorem \ref{S-adjoint}. Motivated by this we define the operator 
$$T: L^2(D_0) \longrightarrow  L^2(D_0)  \quad \text{ given by } \quad  Th = h + w\big|_{D_0}.$$
By the well-posedness of \eqref{prob2} we have that $T$ defines a bounded linear operator. We observe that one can factorize the operator $G$ such that $G = -S^*T$. Recall, that we have already shown $(\Lambda - \Lambda_0)  =-GS$ which give the following result. 
\begin{theorem}\label{factorization}
The difference of the DtN mappings $(\Lambda - \Lambda_0) : H^{1/2}(\Gamma_{1}) \longrightarrow H^{-1/2}(\Gamma_{1}) $ has the factorization $(\Lambda - \Lambda_0)= S^* T S.$
\end{theorem}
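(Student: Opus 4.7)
The plan is essentially to assemble two identities that the preceding discussion has already put in place: the identity $(\Lambda-\Lambda_0)=-GS$ coming from the definition of $u$, $u_0$ and the source problem for $w$, and an identity of the form $G=-S^{*}T$ that will come from rewriting the PDE for $w$ so that the left-hand side matches the operator $-\Delta+\chi(D_0)$ that appears in the adjoint formula from Theorem \ref{S-adjoint}. No genuinely new analysis is required; the proof is bookkeeping that combines results already established.

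First, I would recall that for $f\in H^{1/2}(\Gamma_1)$, if $u$ solves \eqref{prob1} and $u_0$ solves \eqref{harmonic}, then $w:=u_0-u$ lies in $H^1_0(D)$ and satisfies $-\Delta w=\chi(D_0)u$ in $D$. Taking the normal trace on $\Gamma_1$ yields $(\Lambda-\Lambda_0)f=-\partial_\nu w|_{\Gamma_1}=-GSf$, where I have used that $h=-u|_{D_0}=-Sf$ is the source that feeds the operator $G$.

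Next, I would rewrite \eqref{prob2} by adding $\chi(D_0)w$ to both sides to obtain
\[
-\Delta w+\chi(D_0)w=\chi(D_0)\bigl(h+w|_{D_0}\bigr)\quad\text{in }D,\qquad w|_{\Gamma_1}=0.
\]
The right-hand side is exactly the source appearing in the adjoint problem of Theorem \ref{S-adjoint}, with the effective source $Th:=h+w|_{D_0}$, which is well defined and bounded on $L^2(D_0)$ by the well-posedness of \eqref{prob2}. Comparing $w$ with the function $v$ defined in Theorem \ref{S-adjoint} applied to $g=Th$ gives, by uniqueness, $w=v$, and hence
\[
Gh=\partial_\nu w|_{\Gamma_1}=\partial_\nu v|_{\Gamma_1}=-S^{*}(Th),
\]
so that $G=-S^{*}T$ as operators from $L^2(D_0)$ into $H^{-1/2}(\Gamma_1)$.

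Finally, I would substitute this into the first identity to conclude
\[
(\Lambda-\Lambda_0)f=-GSf=S^{*}TSf\quad\text{for all }f\in H^{1/2}(\Gamma_1),
\]
which is the desired factorization. The only conceptual point that required care was the rewriting of the equation for $w$ to expose the operator $-\Delta+\chi(D_0)$ appearing in $S^{*}$; once that is done, the rest is immediate, and I do not expect a real obstacle in the argument.
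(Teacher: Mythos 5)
Your proof is correct and follows essentially the same route as the paper: it establishes $(\Lambda-\Lambda_0)=-GS$, rewrites the source problem for $w$ as $-\Delta w+\chi(D_0)w=\chi(D_0)\bigl(h+w|_{D_0}\bigr)$ so that Theorem~\ref{S-adjoint} yields $G=-S^{*}T$, and composes the two identities. The only blemish is a sign slip in your bookkeeping: with your choice $w=u_0-u$ the source feeding $G$ is $h=+u|_{D_0}=Sf$ rather than $-Sf$, but the displayed conclusion $(\Lambda-\Lambda_0)f=-\partial_\nu w|_{\Gamma_1}=-GSf$ is nevertheless correct.
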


In order to apply Theorem \ref{reg-range-lemma} to solve the inverse problem of recovering $D_0$ from the difference of the DtN mappings $(\Lambda - \Lambda_0)$ we need to prove that $T$ is coercive on the range of $S$ and to characterize the region $D_0$ by the range of $S^*$. Once we have this we can apply Theorem \ref{reg-range-lemma} to reconstruct $D_0$ from the measured Neumann data $\Lambda f = \dnu u|_{_{\Gamma_{1}} } $ and computed Neumann data $\Lambda_0 f = \dnu u_0|_{_{\Gamma_{1}} } $ on the known outer boundary. We now prove coercivity of the operator $T$.

\begin{theorem}\label{coercive}
The operator 
$$T: L^2(D_0) \longrightarrow  L^2(D_0)  \quad \text{ given by } \quad  Th = h + w\big|_{D_0}$$
where $w$ satisfies \eqref{prob2} is coercive on $L^2(D_0)$.
\end{theorem}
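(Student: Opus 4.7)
The plan is to show that the coercivity follows from a direct computation of $(Th,h)_{L^2(D_0)}$, with the desired lower bound actually holding with constant $\beta=1$. Expanding the inner product yields
$$(Th,h)_{L^2(D_0)} = \|h\|^2_{L^2(D_0)} + (w,h)_{L^2(D_0)},$$
so it suffices to show that the cross term $(w,h)_{L^2(D_0)}$ is real and non-negative.

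The key idea is to use the PDE satisfied by $w$ to convert the $L^2(D_0)$ inner product into an $H^1_0(D)$ seminorm. Since $w \in H^1_0(D)$ solves $-\Delta w = \chi(D_0) h$ in $D$, taking complex conjugates gives $-\Delta \overline{w} = \chi(D_0)\overline{h}$. Therefore
$$(w,h)_{L^2(D_0)} = \int_{D_0} w\,\overline{h}\,\text{d}x = \int_D w\, \chi(D_0)\overline{h}\,\text{d}x = \int_D w\,(-\Delta \overline{w})\,\text{d}x.$$
Next I would apply Green's first identity. Because $w \in H^1_0(D)$ vanishes on $\Gamma_1$, the boundary term drops and we obtain
$$(w,h)_{L^2(D_0)} = \int_D \nabla w \cdot \nabla \overline{w}\,\text{d}x = \|\nabla w\|^2_{L^2(D)} \geq 0.$$
Combining these two computations gives
$$(Th,h)_{L^2(D_0)} = \|h\|^2_{L^2(D_0)} + \|\nabla w\|^2_{L^2(D)} \geq \|h\|^2_{L^2(D_0)},$$
which is the desired coercivity inequality on all of $L^2(D_0)$ (and hence in particular on $\text{Range}(S)$, as needed for Theorem \ref{reg-range-lemma}).

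There is no serious obstacle to this argument. The only technical point worth stating carefully is the justification of Green's identity in the complex-valued $H^1_0(D)$ setting, which follows from density of smooth functions and the fact that $\Delta w \in L^2(D)$ since $\chi(D_0) h \in L^2(D)$; in particular $w \in H^1_0(D) \cap H^2_{\text{loc}}(D)$ is regular enough to make sense of the integration by parts as a duality pairing between $H^1_0(D)$ and $H^{-1}(D)$. Modulo this standard verification, the proof reduces to the short identity-chasing outlined above.
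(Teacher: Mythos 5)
Your proposal is correct and follows essentially the same route as the paper's proof: expand $(Th,h)_{L^2(D_0)}$, use $-\Delta \overline{w} = \chi(D_0)\overline{h}$ to rewrite the cross term, and apply Green's first identity with the vanishing trace of $w$ on $\Gamma_1$ to obtain $(Th,h)_{L^2(D_0)} = \|h\|^2_{L^2(D_0)} + \|\nabla w\|^2_{L^2(D)} \geq \|h\|^2_{L^2(D_0)}$. Your added remark on justifying the integration by parts via the $H^1_0(D)$--$H^{-1}(D)$ duality is a reasonable technical supplement that the paper leaves implicit.
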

\begin{proof}
To prove the claim, notice that 
\begin{align*} 
(Th,  h)_{L^2(D_0)}  = \int\limits_{D_0} |h|^2 +w\overline{h} \,  \text{d}x = \int\limits_{D_0} |h|^2 \, \text{d}x  +  \int\limits_{D} \chi({D}_0) w \overline{h} \, \text{d}x. 
\end{align*}
Now, by equation \eqref{prob2} and Green's 1st Theorem we conclude that 
\begin{align*} 
(Th,  h)_{L^2(D_0)}  = \int\limits_{D_0} |h|^2 \, \text{d}x  -  \int\limits_{D} w\Delta \overline{w} \, \text{d}x  = \int\limits_{D_0} |h|^2 \, \text{d}x  +  \int\limits_{D} |\grad  w |^2\, \text{d}x 
\end{align*}
which prove that claim. 
\end{proof}

From this, we can prove the following result that characterizes the analytical properties for the difference of the DtN mappings.

\begin{theorem}\label{current-gap}
The difference of the DtN mappings $(\Lambda - \Lambda_0) : H^{1/2}(\Gamma_{1}) \longrightarrow H^{-1/2}(\Gamma_{1}) $ is compact and injective with a dense range.
\end{theorem}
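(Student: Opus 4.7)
The plan is to deduce each of the three properties directly from the factorization $(\Lambda - \Lambda_0) = S^* T S$ of Theorem \ref{factorization}, combined with the compactness and injectivity of $S$ from Theorem \ref{S-adjoint} and the coercivity of $T$ from Theorem \ref{coercive}. Compactness is immediate: since $S: H^{1/2}(\Gamma_1) \to L^2(D_0)$ is compact and both $T$ and $S^*$ are bounded, the composition $S^* T S$ is a compact operator from $H^{1/2}(\Gamma_1)$ into $H^{-1/2}(\Gamma_1)$.

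For injectivity, I would assume $(\Lambda - \Lambda_0) f = 0$ and pair with $f$ through the sesquilinear dual-product on $\Gamma_1$. Using the factorization and the defining relation \eqref{adjoint} of the adjoint $S^*$, this yields
\[
0 = \langle f, (\Lambda - \Lambda_0) f \rangle_{\Gamma_1} = (Sf, T Sf)_{L^2(D_0)}.
\]
Taking complex conjugates produces $(T Sf, Sf)_{L^2(D_0)} = 0$, and the coercivity estimate of Theorem \ref{coercive} then forces $\|Sf\|_{L^2(D_0)} = 0$. The injectivity of $S$ from Theorem \ref{S-adjoint} finally gives $f = 0$.

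For the density of the range, I plan to use the standard duality argument: the range of $(\Lambda - \Lambda_0)$ is dense in $H^{-1/2}(\Gamma_1)$ precisely when every $g \in H^{1/2}(\Gamma_1)$ that annihilates it under the dual pairing is zero. Assuming $\langle g, (\Lambda - \Lambda_0) f \rangle_{\Gamma_1} = 0$ for every $f \in H^{1/2}(\Gamma_1)$, the factorization again lets me rewrite this as $(Sg, T Sf)_{L^2(D_0)} = 0$ for all $f$, and specializing to $f = g$ reduces to $(Sg, T Sg)_{L^2(D_0)} = 0$. The same coercivity and injectivity argument used above then shows $g = 0$.

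The main technical obstacle is really just bookkeeping around sesquilinearity: one must be careful when moving between $\langle f, S^* v\rangle_{\Gamma_1}$ and $(Sf, v)_{L^2(D_0)}$, and when using the fact that $(T \cdot, \cdot)_{L^2(D_0)}$ is real and nonnegative on the diagonal (so its vanishing is preserved under conjugation). Once these conventions are correctly tracked, all three properties follow almost mechanically from the three ingredients supplied by Theorems \ref{factorization}, \ref{S-adjoint}, and \ref{coercive}, with the injectivity and density arguments being structurally identical.
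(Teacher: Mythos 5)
Your proposal is correct, and the compactness and injectivity parts coincide with what the paper intends (it simply declares these ``clear consequences'' of Theorems \ref{S-adjoint} and \ref{coercive}; your pairing-with-$f$ argument is the natural way to make that explicit). Where you genuinely diverge is the density of the range. You run the annihilator argument entirely through the abstract factorization: $\langle g,(\Lambda-\Lambda_0)g\rangle_{\Gamma_1}=(Sg,TSg)_{L^2(D_0)}$, then coercivity of $T$ forces $Sg=0$ and injectivity of $S$ forces $g=0$. The paper instead goes back to the PDEs: it applies Green's first identity to write $\langle f,(\Lambda-\Lambda_0)f\rangle_{\Gamma_1}$ as a difference of Dirichlet energies plus $\int_{D_0}|u^f|^2\,\mathrm{d}x$, and uses the fact that the harmonic lifting \eqref{harmonic} minimizes the Dirichlet energy to conclude that this quantity dominates $\int_{D_0}|u^f|^2\,\mathrm{d}x$, after which injectivity of $S$ finishes the job. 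The two routes prove the same positivity statement; yours is shorter and reuses the machinery already established (it is essentially the observation that a positive, injectively factorized compact operator automatically has dense range), while the paper's explicit energy identity has the side benefit of independently verifying that $(\Lambda-\Lambda_0)$ is a positive operator --- a hypothesis needed elsewhere (Theorem \ref{reg-range-lemma}) --- directly from the variational structure rather than through the factorization. Your caveats about sesquilinearity (the pairing is conjugate-linear in the second slot, and $(Sg,TSg)$ is real and nonnegative by Theorem \ref{coercive}, so vanishing survives conjugation) are exactly the right bookkeeping points and do not hide any gap.
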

\begin{proof}
The compactness and injectivity are clear consequences of Theorems \ref{S-adjoint} and \ref{coercive}. 
In order to prove the density of the range it is sufficient to show that the set of annihilators for Range$(\Lambda - \Lambda_0)$ is trivial(see Corollary 1.8 of \cite{Brezis}). To this end, notice that for all $f,g \in H^{1/2}(\Gamma_{1})$
\begin{align*}
\big\langle g , (\Lambda - \Lambda_0) f \big\rangle_{\Gamma_1}&= \int\limits_{\Gamma_1} g\,  \partial_{\nu} \overline{u^f} -  g \, \partial_{\nu} \overline{u^f_0} \, \text{d}s = \int\limits_{\Gamma_1} u^g \partial_{\nu} \overline{u^f} \, - \,u^g_0 \partial_{\nu} \overline{u^f_0} \, \text{d}s
\end{align*}
Here the pairs of functions $(u^f , u^g) $ and  $(u^f_0 , u^g_0)$ are the solutions to \eqref{prob1} and \eqref{harmonic}, for either $f$ or $g$ respectively.  We appeal to Green's 1st Theorem to obtain
\begin{align*}
\big\langle g , (\Lambda - \Lambda_0) f \big\rangle_{\Gamma_1}&= \int\limits_{D} \grad u^g  \cdot \grad \overline{u^f} + \chi(D_0) u^g \overline{u^f} \, \text{d}x  - \int\limits_{D} \grad u^g_0  \cdot \grad \overline{u^f_0} \, \text{d}x 
\end{align*}
by using \eqref{prob1} and \eqref{harmonic}. Now let $f \in H^{1/2}(\Gamma_{1})$ be an annihilator for Range$(\Lambda - \Lambda_0)$. Then we have that 
\begin{align*}
0=\big\langle f , (\Lambda - \Lambda_0) f \big\rangle_{\Gamma_1}&= \int\limits_{D} |\grad u^f|^2 \, \text{d}x - \int\limits_{D} |\grad u^f_0|^2 \, \text{d}x  + \int\limits_{D_0} |u^f|^2  \, \text{d}x \geq \int\limits_{D_0} |u^f|^2  \, \text{d}x
\end{align*}
where we have used that $u^f_0$ satisfying \eqref{harmonic} minimizes the Dirichlet energy. By Theorem \ref{S-adjoint} we have the $f=0$, which proves the claim.
\end{proof}

Notice, that by Theorems \ref{S-adjoint}, \ref{factorization}, and \ref{coercive} we have that the difference of the DtN mappings $(\Lambda - \Lambda_0)$ satisfies the assumptions of Theorem \ref{reg-range-lemma}. The last piece that we need is to connect the domain $D_0$ to the range of the operator $S^*$. This is due to Theorem  \ref{reg-range-lemma} which gives that 
$$\ell \in\text{Range} (S^*) \quad \text{ if and only if} \quad \liminf\limits_{\alpha \to 0} \langle f_\alpha \, ,(\Lambda - \Lambda_0) f_\alpha \rangle_{\Gamma_1} < \infty$$
where $f_\alpha$ is the regularized solution to $(\Lambda - \Lambda_0) f = \ell$. Since $(\Lambda - \Lambda_0)$ is compact and injective with a dense range we can apply either Tikhonov's regularization or the Spectral cutoff as the regularization scheme as discussed in the previous section. Now, we connect the range of the operator $S^*$ to the unknown region  $D_0$. To do so, we now define the Dirichlet Green's function the negative Laplacian for the known domain $D$ as $\mathbb{G} (\cdot \, ,z) \in H^1_{loc} \big(D \setminus \{z\} \big)$, which is the unique solution to 
$$ - \Delta  \mathbb{G} (\cdot \, , \, z) = \delta(\cdot - z)  \quad \text{in} \quad  D \quad  \text{and} \quad  \mathbb{G} (\cdot \, ,  z)|_{_{\Gamma_{1}} }  =0$$
for any fixed $z \in D$. The idea of the following result is to show that due to the singularity at $z$ the normal derivative of the Green's function is not contained in the range of $S^*$ unless the singularity is contained within the region of interest $D_0$.

\begin{theorem}\label{range}
The operator $S^*$ is such that for any $z \in D$ 
$$\partial_{\nu} \mathbb{G} (\cdot \, ,  z)|_{_{\Gamma_{1}} }  \in \text{Range}(S^*) \quad \text{ if and only if } \quad z \in D_0. $$ 
\end{theorem}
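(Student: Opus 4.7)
The plan is to establish the equivalence by treating each implication separately. The forward direction will be handled by constructing an explicit preimage in $L^2(D_0)$ of the functional $\partial_{\nu} \mathbb{G}(\cdot, z)|_{\Gamma_1}$ under $S^*$, while the converse will be proved by contradiction, exploiting the singularity of the Dirichlet Green's function at $z$ together with Holmgren's unique continuation theorem.

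For the forward direction, fix $z \in D_0$ and choose $\varepsilon > 0$ small enough that $\overline{B_\varepsilon(z)} \subset D_0$. Let $\eta \in C_c^\infty(B_\varepsilon(z))$ be a smooth cutoff with $\eta \equiv 1$ on $B_{\varepsilon/2}(z)$, and set $v := -(1-\eta)\mathbb{G}(\cdot, z)$. Because $1-\eta$ vanishes in a neighborhood of $z$, the singularity of the Green's function is suppressed, so $v$ is smooth in $D$ and lies in $H^1_0(D)$ (using $\mathbb{G}(\cdot, z)|_{\Gamma_1} = 0$). A short product-rule calculation together with the identity $(1-\eta)\delta_z = 0$ shows that $-\Delta v$ is smooth and compactly supported in $D_0$. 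Setting $g := (-\Delta v + v)|_{D_0} \in L^2(D_0)$, one checks that $v$ is the unique $H^1_0(D)$ solution to the adjoint PDE in Theorem \ref{S-adjoint} driven by $g$. Since $\eta$ vanishes near $\Gamma_1$, this gives $S^* g = -\partial_\nu v|_{\Gamma_1} = \partial_\nu \mathbb{G}(\cdot, z)|_{\Gamma_1}$, as needed.

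For the converse, assume $z \notin D_0$ and, toward a contradiction, that $\partial_\nu \mathbb{G}(\cdot, z)|_{\Gamma_1} = S^* g$ for some $g \in L^2(D_0)$; let $v \in H^1_0(D)$ be the corresponding adjoint solution. Set $W := v + \mathbb{G}(\cdot, z)$. Then $W|_{\Gamma_1} = 0$ and $\partial_\nu W|_{\Gamma_1} = 0$ by construction. Because $\mathrm{dist}(\Gamma_1, \overline{D_0}) = \delta > 0$, $W$ is harmonic in a neighborhood of $\Gamma_1$, so Holmgren's theorem yields $W \equiv 0$ on the connected component of $D \setminus (\overline{D_0} \cup \{z\})$ adjacent to $\Gamma_1$. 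Since $v \in H^1_0(D)$, both the trace and normal trace of $W$ are continuous across $\partial D_0$, so $W$ inherits zero Cauchy data on the face of $\partial D_0$ bordering this component. Iterating Holmgren for the operator $-\Delta + 1$ inside $D_0$ (noting that $\mathbb{G}(\cdot, z)$ is smooth there since $z \notin D_0$), and then through any further faces of $\partial D_0$ into interior cavities, propagates $W \equiv 0$ throughout $D \setminus \{z\}$. This forces $v = -\mathbb{G}(\cdot, z)$ in a punctured neighborhood of $z$, which contradicts the fact that $v$ is smooth near $z$ (since $v$ is harmonic on $D \setminus D_0$, which contains a neighborhood of $z$) while $\mathbb{G}(\cdot, z)$ blows up there.

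The main obstacle is the unique-continuation step in the converse when $D_0$ has nontrivial topology --- for instance, several connected components, or $z$ sitting inside a cavity of $D_0$ rather than in the outermost component of $D \setminus \overline{D_0}$. This is addressed by iterating Holmgren across each face of $\partial D_0$, relying on the Lipschitz regularity of $\partial D_0$ and the connectedness of $D \setminus \{z\}$ (valid for $d \geq 2$). The borderline case $z \in \partial D_0$ falls into the "not in $D_0$" branch and requires only a minor additional argument to control the Cauchy data at the singular boundary point.
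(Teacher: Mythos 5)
Your proof follows essentially the same route as the paper: the same cutoff construction $v=-(1-\eta)\,\mathbb{G}(\cdot,z)$ with $g=(-\Delta v+v)|_{D_0}$ for the forward direction, and the same contradiction via $W=v+\mathbb{G}(\cdot,z)$, Holmgren's theorem from the zero Cauchy data on $\Gamma_1$, and the clash between the smoothness of $v$ near $z$ and the singularity of $\mathbb{G}(\cdot,z)$ for the converse. One caveat on your added topological discussion: the proposed ``iteration of Holmgren through $D_0$'' does not actually work, since inside $D_0$ the function $W$ satisfies $-\Delta W+W=g+\mathbb{G}(\cdot,z)$ with $g\in L^2(D_0)$ unknown, so zero Cauchy data on a face of $\partial D_0$ gives no unique continuation there; both your argument and the paper's really require that $D\setminus\overline{D}_0$ be connected (no cavities), which is the implicit standing assumption, and in the cavity case the statement itself is expected to fail (one only recovers the outer hull).
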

\begin{proof}
To begin, we first assume that $z \in D_0$ which implies that there is some $\ep>0$ such that the ball centered at $z$ satisfies $B(z \, ; \ep) \subset D_0$. Now let $\Psi \in C^{\infty}(\R)$ be a smooth cutoff function with $\Psi(t)=1$ for $|t|\geq \ep$ and $\Psi(t)=0$ for $|t|< \ep/2$ where we define the smooth function $v_z = - \Psi(| \cdot \, - z|)  \mathbb{G} (\cdot \, , \, z)$. Notice that the function $v_z$ satisfies $v_z|_{_{\Gamma_{1}} }=0$ and $-\partial_{\nu} v_z |_{_{\Gamma_{1}} } = \partial_{\nu} \mathbb{G} (\cdot \, ,  z)|_{_{\Gamma_{1}} }$. 
Also, notice that 
$$-\Delta v_z =0 \,\, \text{ in } \,\, D \setminus \overline{D}_0  \quad \text{ and } \quad  -\Delta v_z+\chi({D}_0) v_z \in L^2(D)  \,\, \text{ with support in $D_0$.} $$
Therefore, if we take $g_z =  -\Delta v_z+\chi({D}_0) v_z $ we have that $S^* g_z = \partial_{\nu} \mathbb{G} (\cdot \, ,  z)|_{_{\Gamma_{1}} }$. Proving that 
$$\partial_{\nu} \mathbb{G} (\cdot \, ,  z)|_{_{\Gamma_{1}} }  \in \text{Range}(S^*) \quad \text{ provided} \quad z \in D_0 . $$

Now, we assume that $z \in  D \setminus \overline{D}_0$ and we proceed by way of contradiction. To this end, assume there exists a $g_z \in L^2(D_0)$ such that  $S^* g_z = \partial_{\nu} \mathbb{G} (\cdot \, ,  z)|_{_{\Gamma_{1}} }$. This would imply that there is a $v_z \in H^1(D)$ such that 
$$ -\Delta v_z = 0 \,\, \text{ in } \,\, D \setminus \overline{D}_0 \quad \text{ and } \quad  v_z |_{_{\Gamma_{1}} } = 0 \quad \text{ with } \quad -\partial_{\nu} v_z |_{_{\Gamma_{1}} } = \partial_{\nu} \mathbb{G} (\cdot \, ,  z)|_{_{\Gamma_{1}} } .$$
Therefore, we define $W_z = v_z + \mathbb{G} (\cdot \, ,  z)$ and notice that 
$$\Delta W_z =0 \,\, \text{ in } \,\, D \setminus (\overline{D}_0 \cup \{z\}) \quad \text{ and } \quad W_z  |_{_{\Gamma_{1}} }=\partial_{\nu} W_z |_{_{\Gamma_{1}} } = 0.$$
By appealing to Holmgren's Theorem we can conclude that $W_z =0$ in $D \setminus (\overline{D}_0 \cup \{z\})$ i.e. $v_z= -\mathbb{G} (\cdot \, ,  z)$ in  $D \setminus (\overline{D}_0 \cup \{z\})$. By interior elliptic regularity we have that $v_z$ is continuous at $z \in D \setminus \overline{D}_0$ where as $\mathbb{G} (\cdot \, ,  z)$ has a singularity at $z$. Proving the claim by contradiction since $|v_z(x)|=\mathcal{O}(1)$ and $|\mathbb{G} (x ,  z)| \to \infty$ as $x \to z$. 
\end{proof}

Now, by combining the previous results in this section we have the follow regularized variant of the Factorization Method for recovering the unknown region $D_0$ from the knowledge of difference of the DtN mappings $(\Lambda - \Lambda_0)$. 

\begin{theorem}\label{recon-thm}
The difference of the DtN mappings $(\Lambda - \Lambda_0) : H^{1/2}(\Gamma_{1}) \longrightarrow H^{-1/2}(\Gamma_{1}) $ uniquely determines $D_0$ such that for any $z \in D$
$$ z \in D_0 \quad \text{ if and only if } \quad \liminf\limits_{\alpha \to 0} \big\langle f^z_\alpha \, ,(\Lambda - \Lambda_0) f^z_\alpha \big\rangle_{\Gamma_1} < \infty$$
where  $f^z_\alpha$ is the regularized solution to $(\Lambda - \Lambda_0) f^z = \partial_{\nu} \mathbb{G} (\cdot \, ,  z)|_{_{\Gamma_{1}} }$. 
\end{theorem}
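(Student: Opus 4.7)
The plan is to assemble Theorem \ref{recon-thm} as a two-step chain of equivalences: first use Theorem \ref{reg-range-lemma} to translate finiteness of the $\liminf$ into membership in Range$(S^*)$, and then use Theorem \ref{range} to convert that membership into the geometric condition $z \in D_0$.

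First I would verify that the abstract hypotheses of Theorem \ref{reg-range-lemma} hold for $A := \Lambda - \Lambda_0$ acting between $X = H^{1/2}(\Gamma_1)$ and $X^* = H^{-1/2}(\Gamma_1)$, with $L^2(\Gamma_1)$ as the Hilbert pivot for the Gelfand triple. Theorem \ref{factorization} supplies the factorization $A = S^* T S$ with $S : H^{1/2}(\Gamma_1) \to L^2(D_0)$ and $T : L^2(D_0) \to L^2(D_0)$; Theorem \ref{S-adjoint} yields that $S$ is compact and injective; Theorem \ref{coercive} yields coercivity of $T$ on all of $L^2(D_0)$, hence on Range$(S)$; compactness of $A$ then follows from compactness of $S$, as already recorded in Theorem \ref{current-gap}. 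Positivity of $A$ is the only remaining item, and it is immediate from the identity obtained in the proof of Theorem \ref{current-gap},
\[ \langle f, A f \rangle_{\Gamma_1} = \int_{D} |\grad u^f|^2 - |\grad u^f_0|^2 \, \text{d}x + \int_{D_0} |u^f|^2 \, \text{d}x, \]
which is nonnegative by the Dirichlet minimization principle (applied to $u^f_0$) and strictly positive for $f \neq 0$ by injectivity of $S$. Since the Tikhonov and spectral cutoff filters in \eqref{filters} obey the uniform bound and pointwise convergence required in Section \ref{RFM}, the regularized solutions $f^z_\alpha$ slot directly into the abstract framework.

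Second I would apply Theorem \ref{reg-range-lemma} to the right-hand side $\ell_z := \partial_\nu \mathbb{G}(\cdot, z)|_{\Gamma_1}$, which lies in $H^{-1/2}(\Gamma_1)$ for every fixed $z \in D$ by interior elliptic regularity of $\mathbb{G}(\cdot, z)$ away from $z$ together with the trace theorem near $\Gamma_1$ (recall $\text{dist}(\Gamma_1, z) > 0$). This gives
\[ \ell_z \in \text{Range}(S^*) \iff \liminf_{\alpha \to 0} \langle f^z_\alpha, A f^z_\alpha \rangle_{\Gamma_1} < \infty. \]
Chaining with the geometric equivalence $\ell_z \in \text{Range}(S^*) \iff z \in D_0$ from Theorem \ref{range} produces the stated characterization. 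Unique determination of $D_0$ from the measured data $A$ is then automatic, since the indicator map $z \mapsto \liminf_{\alpha \to 0} \langle f^z_\alpha, A f^z_\alpha \rangle_{\Gamma_1}$ depends only on $A$ and on the known Green's function of $D$.

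I do not anticipate any substantive obstacle: Theorem \ref{recon-thm} is essentially the packaging of Theorems \ref{reg-range-lemma}, \ref{S-adjoint}, \ref{coercive}, \ref{current-gap}, and \ref{range} into a single statement. The only mildly delicate bookkeeping item is confirming that the sesquilinear form $\langle \cdot, \cdot \rangle_{\Gamma_1}$ used on the boundary indeed plays the role of the abstract dual pairing $\langle \cdot, \cdot \rangle_{X \times X^*}$, so that the spectral decomposition \eqref{Adecomp} and the regularized solution formula \eqref{regsolu} transfer verbatim to $A = \Lambda - \Lambda_0$; this is built into the choice of Gelfand triple $H^{1/2}(\Gamma_1) \subset L^2(\Gamma_1) \subset H^{-1/2}(\Gamma_1)$ and so requires no additional argument.
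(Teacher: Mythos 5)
Your proposal is correct and follows exactly the route the paper intends: the paper presents Theorem \ref{recon-thm} as an immediate combination of Theorems \ref{S-adjoint}, \ref{factorization}, \ref{coercive} (verifying the hypotheses of Theorem \ref{reg-range-lemma} for $A=\Lambda-\Lambda_0$) with the range characterization of Theorem \ref{range}. Your extra check of positivity via the Dirichlet-energy identity is a valid alternative to the more direct observation that $\langle f, Af\rangle_{\Gamma_1}=(Sf,TSf)_{L^2(D_0)}\geq \beta\|Sf\|^2_{L^2(D_0)}>0$ for $f\neq 0$, but either way the argument matches the paper's.
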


This not only give a theoretically rigorous proof for the inverse shape problem but also a computable algorithm for reconstructing $D_0$. By analyzing the addition of the  regularization step theoretically justifies the uses of these techniques as is done in the literature(see for e.g. \cite{fm-gbc,Liem}). We will see that Theorem \ref{recon-thm} can be used to numerically recover $D_0$.

\section{Numerical Examples}\label{numerics}
In this section, we present numerical examples for the Regularized Factorization Method for solving the inverse shape problem studied in Section \ref{InvShape}. The computations presented here are done in MATLAB 2018a on an iMac with a 4.2 GHz Intel Core i7 processor with 8GB of memory. For simplicity we will consider the problem in $\R^2$ where $D$ is the unit disk. Therefore, we first note that the normal derivative of the Green's function in the unit disk with zero trace is given by the Poisson kernel  
$$  \partial_{\nu} \mathbb{G} \big((1,\theta ),  z \big) |_{_{\Gamma_{1}} } = \frac{1}{2\pi} \left[ \frac{1-|z|^2}{|z|^2 +1-2|z| \cos(\theta - \theta_z )} \right].$$
Here the quantity $\theta_z$ is the polar angle that the sampling point $z \in D$ makes with the positive $x$-axis. 

In order to apply Theorem \ref{recon-thm} to solve the inverse problem we need to compute the difference of the DtN mappings $(\Lambda - \Lambda_0)$. To this end, we will compute the mappings $f \longmapsto (\Lambda - \Lambda_0)f$ where $f$ is given by the standard trigonometric polynomial basis functions. This is due the the fact that the trigonometric polynomials are dense in $H^{ 1/2}(\Gamma_1)=H^{ 1/2}_{\text{per}}[0,2 \pi]$. Recall, that for any $f$ we have that 
\begin{align}
-\Delta (u-u_0)  + \chi({D}_0)  (u-u_0) = - \chi(D_0) u_0  \,\, \text{ in } \,\, D \quad \text{ and } \quad (u-u_0) |_{_{\Gamma_{1}} } = 0. \label{direct-solve}
\end{align}
Since we have assumed that $D$ is the unit disk we have that $u_0$ can be computed via separation of variables. In order to solve \eqref{direct-solve} for $u-u_0$ we consider the variational formulation with the Dirichlet Spectral-Galerkin Method(see \cite{ziteHarris} for details on the approximation space). Once we have computed $u-u_0$ we can then obtain $(\Lambda - \Lambda_0)f$ by take a derivative with respect to $r$ at $r=1$. 

\subsection{Reconstruction by the Regularized Factorization Method}
Here we let $\theta_j $ be $40$ uniformly distributed points in the interval $[0, 2\pi]$. We denote the discretized operator $(\Lambda - \Lambda_0 )$ by the matrix ${\bf A}$ and the vector ${\bf b}_z= \big[ \partial_{r} \mathbb{G} \big( (1,\theta_j ) ,  z \big)  \big]^{40}_{j=1}$ denotes the discretized normal derivative of the Green's function. The sampling points $z$ are given by a uniform grid points in the square $[-1,1]^2$. In our examples, we add random noise $\delta>0$ to the discretized operator such that 
$${\bf A}^\delta =\Big[{\bf A}_{i,j}\big( 1 +\delta \, {\bf E}_{i,j} \big) \Big]_{i,j=1}^{40} \quad \text{ with } \quad  \| {\bf E} \|_2 = 1.$$
Here the matrix ${\bf E}$ is normalized where the initial entries are randomly chosen to be uniformly distributed between $[-1,1]$. To apply Theorem \ref{recon-thm} we numerically solve 
$$ {\bf A}^\delta {\bf f}^z = {\bf b}_z \quad \text{ via a Spectral cutoff.} $$
The regularized solution will be denoted ${\bf f}^z_\alpha$ where the cutoff parameter $\alpha$ is taken to be $10^{-5}$ ad-hoc which gives good results in our experiments. Notice that the discretized version of Theorem \ref{recon-thm} suggests that  
{\color{black} $$ {\big( {\bf f}^z_\alpha , {\bf A}^\delta {\bf f}^z_\alpha \big)} =\mathcal{O}(1)  \quad \text{ if and only if }  \quad z \in D_0$$
when the regularization parameter $\alpha \to 0$. Some simple calculations give that 
$${\big( {\bf f}^z_\alpha , {\bf A}^\delta {\bf f}^z_\alpha \big)} = \sum \frac{\phi^2(\sigma_j  ; \alpha)}{\sigma_j} \big|({\bf u}_j , {\bf b}_z)\big|^2.  $$ Here we let $\phi(t ; \alpha)$ be the filter function describing the regularization method. Also, we will let $\sigma_j$ be the singular values and ${\bf u}_j$ denotes the left singular vectors of the matrix ${\bf A}^\delta$. 
Therefore, in our examples to visualize the region $D_0$ we plot the corresponding imaging functional $W_{\text{RegFM}}(z)$ given by 
$$W_{\text{RegFM}}(z)= \left[ \sum \frac{\phi_{\text{Cutoff}}^2(\sigma_j  ; \alpha)}{\sigma_j} \big|({\bf u}_j , {\bf b}_z)\big|^2 \right]^{-1} \quad \text{with} \quad \phi_{\text{Cutoff}}(t; \alpha)= \left\{\begin{array}{lr} 1 \, &  \, t^2\geq \alpha,  \\
 & \\
 0\, & \,   t^2 < \alpha. \end{array} \right.$$
which is normalized to have maximal value equal to one. This expression of the imaging functional can be used even if the resulting noisy data matrix is not positive definite.} The sampling points are taken to be uniformly space in a grid contained in the unit circle. By Theorem \ref{recon-thm} we expect that $W_{\text{RegFM}}(z)\approx 1$ for $z \in D_0$ and $W_{\text{RegFM}}(z)\approx 0$ for $z \notin D_0$. 

\begin{figure}[ht]
\hspace{-0.5in}\includegraphics[width=18cm]{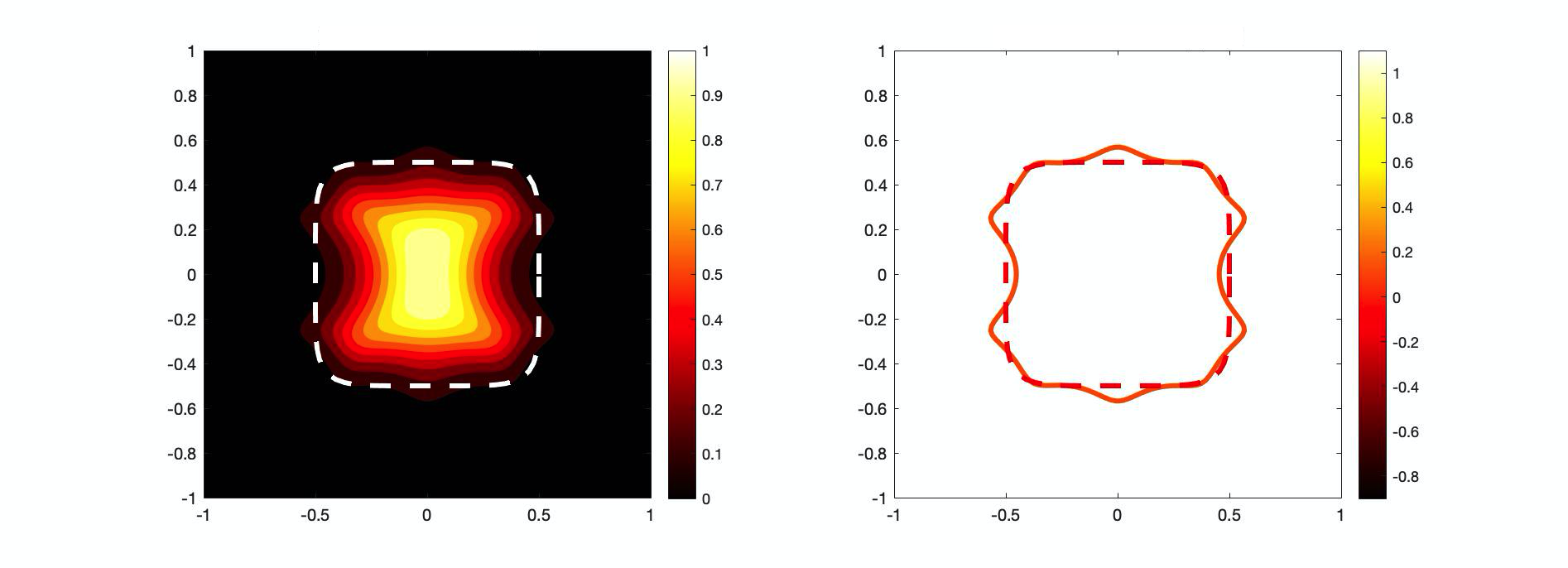}
\caption{Reconstruction of the rounded-square domain by the Regularized FM with noise level $\delta=2\%$. The solid line is the reconstruction of the boundary.}
\label{recon0}
\end{figure}

\begin{figure}[ht]
\hspace{-0.5in}\includegraphics[width=18cm]{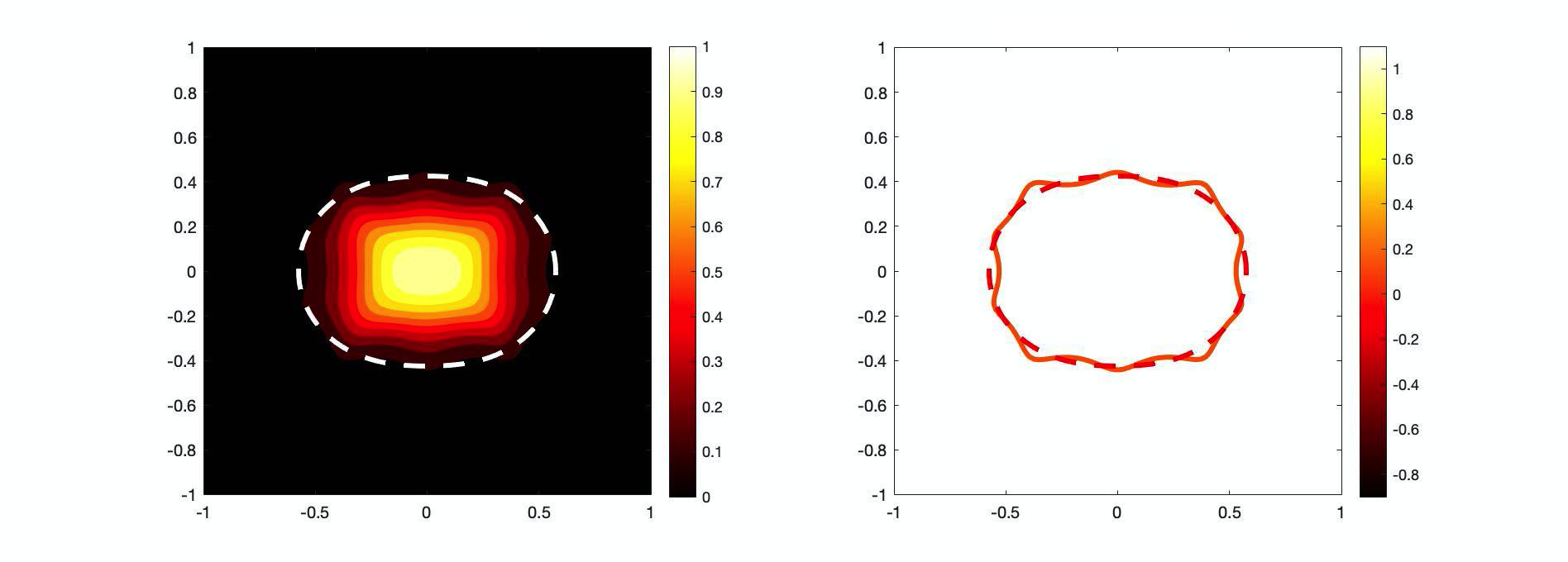}
\caption{Reconstruction of the elliptical shaped domain by the Regularized FM with noise level $\delta=5\%$. The solid line is the reconstruction of the boundary.}
\label{recon1}
\end{figure}

In the examples given we take $D_0$ to be star-like with respect to the origin for simplicity. Then the boundary of $D_0$ is given in a polar representation such that 
$$\partial D_0 =\big\{ \rho(\theta) (\cos\theta, \sin\theta ) \,  \textrm{ where } \, 0\leq \theta < 2\pi \big\}$$ 
with $0<\rho(\theta) < 1$ is a $2\pi$-periodic function. Here we consider different shaped regions given by a rounded-square, elliptical, pear and star shaped region where 
\begin{eqnarray*}
\rho(\theta) = 0.5 \left( |\sin(\theta)|^6 +  |\cos(\theta)|^6 \right)^{-1/6} 
 \quad \text{ or } \quad \rho(\theta) = 0.25 \big(2+0.3\cos(q\theta) \big)
\end{eqnarray*}
with the parameter $q=2,3,5$. Here for $q=2$ corresponds to the elliptical shaped region, $q=3$ corresponds to the pear shaped region, and $q=5$ corresponds to the star shaped region.

\begin{figure}[H]
\hspace{-0.5in}\includegraphics[width=18cm]{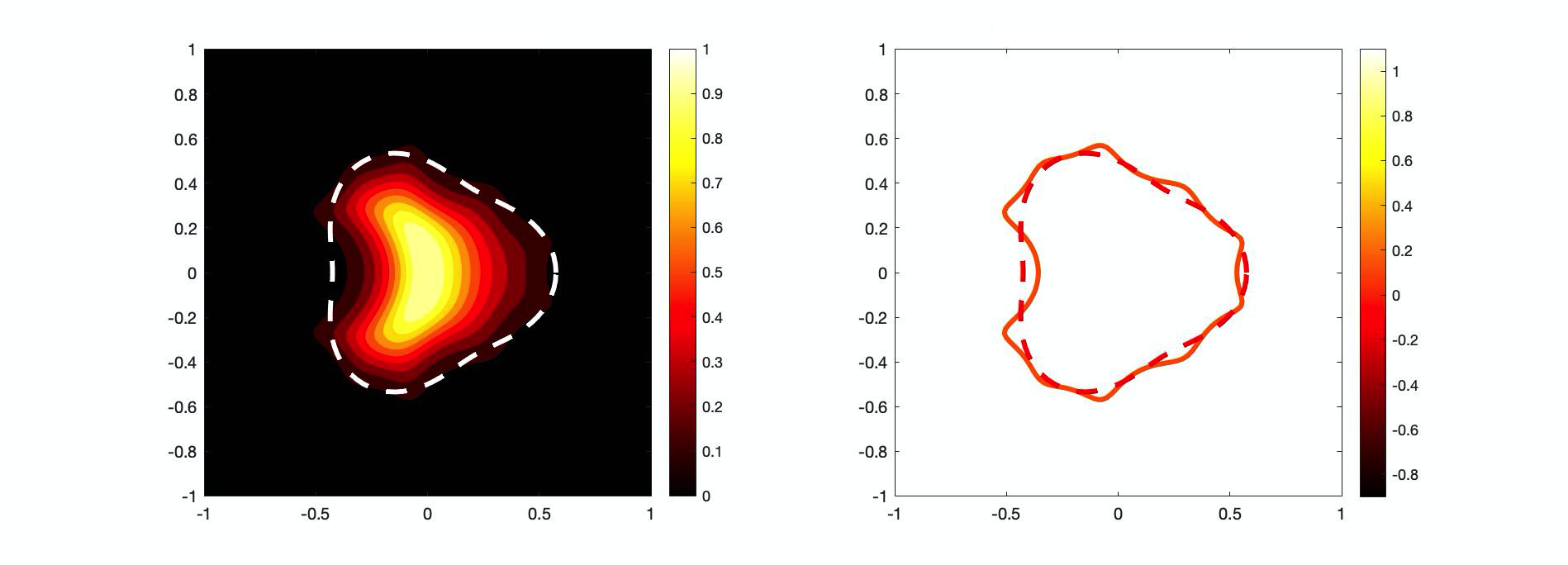}
\caption{Reconstruction of the pear shaped domain by the Regularized FM with noise level $\delta=5\%$. The solid line is the reconstruction of the boundary.   }
\label{recon2}
\end{figure}

\begin{figure}[H]
\hspace{-0.5in}\includegraphics[width=18cm]{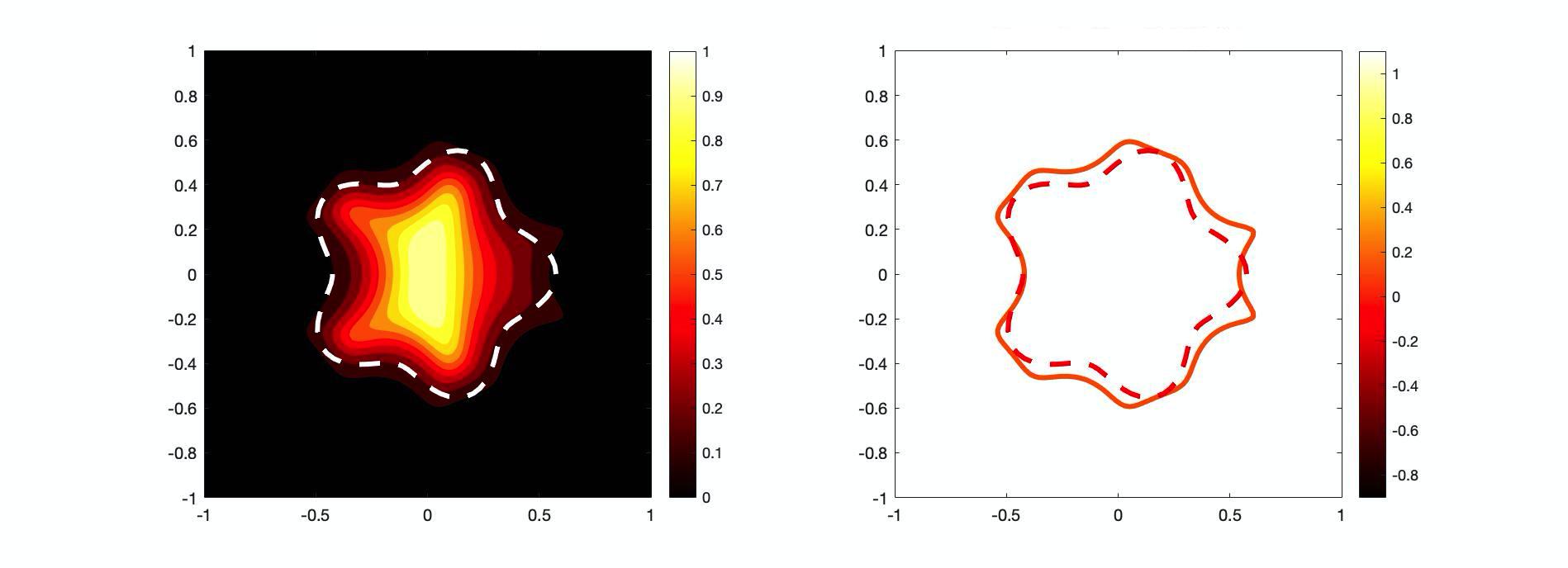}
\caption{Reconstruction of the star shaped domain by the Regularized FM with noise level $\delta=5\%$. The solid line is the reconstruction of the boundary.}
\label{recon3}
\end{figure}

In Figures \ref{recon0}, \ref{recon1}, \ref{recon2}, and \ref{recon3} we present the numerical reconstructions of the different shaped regions. Here the dashed lines are exact boundaries of the region $D_0$. In each reconstruction, we have presented both a contour plot of $W_{\text{RegFM}}(z)$ as well as the contour line $W_{\text{RegFM}}(z)=1/10$ to approximate the boundary $\partial D_0.$ In each of the examples we see that the reconstructions are favorable for each of the shapes. 

\subsection{Comparison to the Generalized Linear Sampling Method}
We now compare our new Regularized Factorization Method(FM) studied here to the Generalized Linear Sampling Method(GLSM). {\color{black}  Recall that the GLSM, gives the same result as Theorem \ref{recon-thm} but the regularization scheme corresponds to minimizing a functional similar to Tikhonov's regularization}. For the problem considered here we need to compute the `minimizer' ${\bf g}^z_\alpha$ of the functional
$$\mathcal{J}_{\alpha} \big({\bf b}_z ; \, {\bf g} \big)  = \alpha \big|\big( {\bf A}^\delta  \, {\bf g} , {\bf g}  \big)  \big| +\|  {\bf A}^\delta {\bf g}- {\bf b}_z \|^2.$$
Therefore, Theorem 3 of \cite{GLSM} suggests that 
$$ {\big|\big( {\bf A}^\delta  \, {\bf g}^z_\alpha, {\bf g}^z_\alpha  \big)  \big|}  = \mathcal{O}(1)  \quad \text{ if and only if } \quad z \in D_0$$
when the regularization parameter $\alpha \to 0$ provided that there is a $C>0$ independent of $\alpha$ such that
$$ \mathcal{J}_{\alpha} \big({\bf b}_z ; \,{\bf g}^z_\alpha \big)  \leq  \inf\limits_{{\bf g}} \mathcal{J}_{\alpha} \big({\bf b}_z ; \, {\bf g} \big)  +C \alpha . $$
Notice, that the result for the GLSM  also needs the extra assumption on the regularized solution given above where as the Regularized FM removes this assumption. It can be shown that when ${\bf A}^\delta$ is a positive definite matrix then the minimizer ${\bf g}^z_\alpha$ satisfies the matrix equation
$$\big(\alpha {\bf A}^\delta +  ({\bf A}^\delta)^{*} {\bf A}^\delta \big) {\bf g}^z_\alpha=({\bf A}^\delta)^{*} {\bf b}_z\quad \text{ for any }\quad \alpha>0.$$
In order to compute ${\bf g}^z_\alpha$, we solve the above equation where the parameter $\alpha$ is taken to be $10^{-3}$ ad-hoc. In general, $\alpha$ should be chosen by a discrepancy principle but in our experiments this choice of $\alpha$ seems to work well. Similarly, the imaging functional for the region $D_0$ for the GLSM is given by $W_{\text{GLSM}}(z)$ where 
$${\color{black}  W_{\text{GLSM}}(z)=\left[ \sum \frac{\phi_{\text{GLSM}}^2(\sigma_j  ; \alpha)}{\sigma_j} \big|({\bf u}_j , {\bf b}_z)\big|^2 \right]^{-1} \quad \text{with filter} \quad \phi_{\text{GLSM}}(t  ; \alpha)=\frac{t}{\alpha+t }}$$
which is normalized to have maximal value equal to one. In Figures \ref{compair1} and \ref{compair2}, we plot the imaging functionals for both the GLSM and Regularized FM for both the rounded-square and star shaped inclusions. 
We can see that both methods give similar reconstructions of the unknown region.

\begin{figure}[H]
\hspace{-0.5in}\includegraphics[width=18cm]{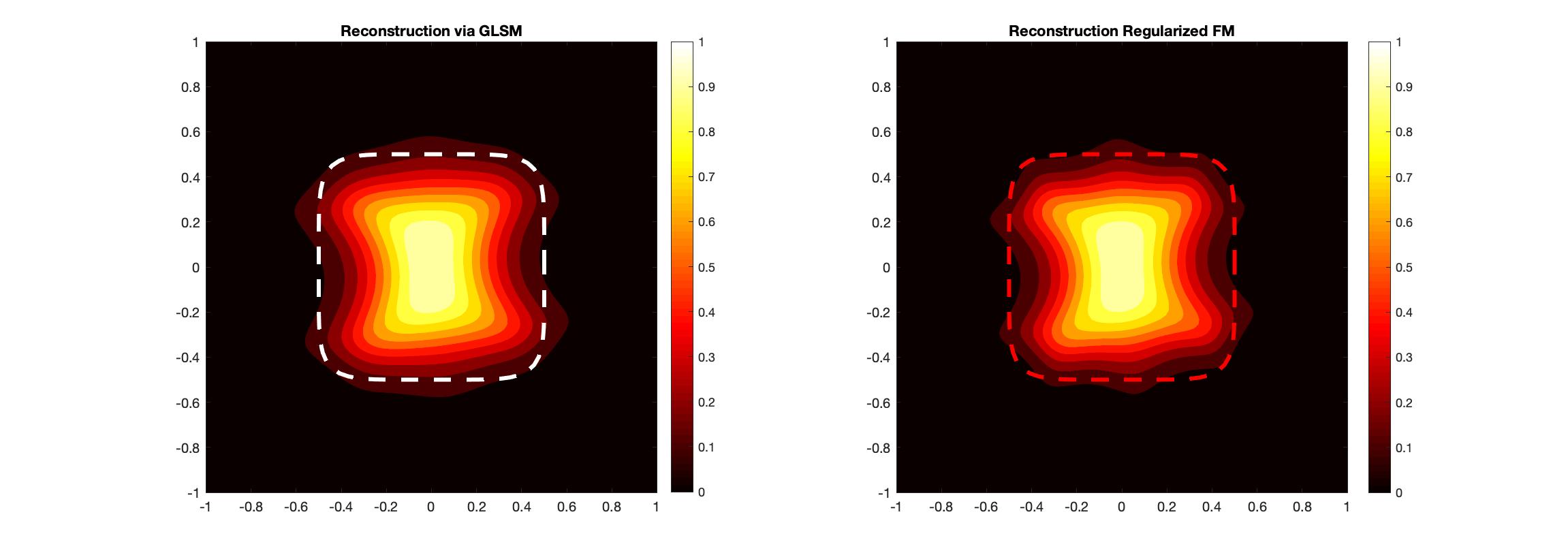}
\caption{Reconstruction by the GLSM on the left and the Regularized FM on the right for the rounded-square domain with noise level $\delta=2\%$. }
\label{compair1}
\end{figure}

\begin{figure}[H]
\hspace{-0.5in}\includegraphics[width=18cm]{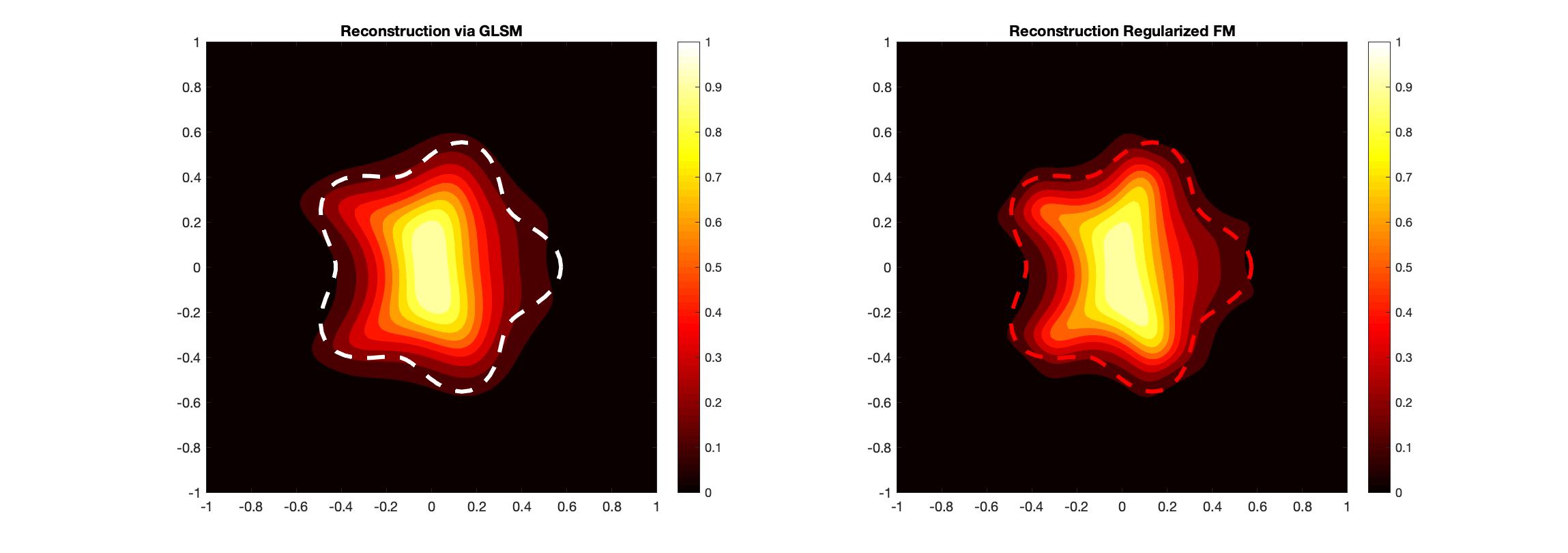}
\caption{Reconstruction by the GLSM on the left and the Regularized FM on the right for the star shaped domain with noise level $\delta=5\%$.  }
\label{compair2}
\end{figure}

\section{Conclusions}\label{end}
In conclusion, we have analyzed a regularized version of the FM and applied our analysis to solve an inverse shape problem coming form semiconductor theory. The result that we have proven matches the regularization techniques that are used in the literature, see for e.g. \cite{fm-gbc,Liem} in the case of inverse scattering. The analysis here generalizes result in \cite{Harris-Rome} to a generic positive compact operator  $A: X \to X^*$ where $X$ is a complex Hilbert Space. The analysis here can be used to recover other inclusions in optical tomography such as the problem discussed in \cite{DOT-RtR,RtR}. One main advantage that this Regularized FM has over the GLSM is that one does not have to minimize a possible non-convex functional. The method presented here can be used for application in inverse scattering. This method is analytically rigorous and gives good numerical reconstructions. Another thing to note is that the GLSM has been shown to work in the presence of noise in the data where this Regularized FM has not been verified for noise present in the operator $A$. 
One would expect that a for $A^{\delta}: X \to  X^*$ such that $\|A^{\delta} - A\| \to 0$ as $\delta \to 0$ then
$$ \ell \in \text{Range}(S^*) \quad \text{ if and only if} \quad \liminf\limits_{\alpha \to 0} \liminf\limits_{\delta \to 0}  \big\langle x^{\delta}_\alpha, A^{\delta} x^{\delta}_\alpha \big\rangle_{X \times X^*} < \infty$$ 
with $x^{\delta}_\alpha$ the regularized solution to $A^{\delta} x=\ell$. Here it is assumed that the regularization parameter $\alpha$ would need to be chosen via a discrepancy principle. For this case we would assume that $\alpha(\delta) \to 0$ as the error $\delta \to 0$ which is the case for Morozov discrepancy principle. The study of the Regularized FM for a perturbed operator will be studied in future works. \\

\noindent{\bf Acknowledgments:} The research of I. Harris is partially supported by the NSF DMS Grant 2107891.


\end{document}